\newcommand{\qee}{\parbox{5pt}{\hfill}\hfill $\triangle$}
\newtheorem{theorem}{Theorem}[section] 
\newtheorem{corollary}[theorem]{Corollary}
\newtheorem{lemma}[theorem]{Lemma} 
\newtheorem{proposition}[theorem]{Proposition}
\newtheorem{definition}[theorem]{Definition}
\theoremstyle{definition}
\theoremstyle{remark}
\newtheorem{remarkee}[theorem]{Remark}
\newtheorem{examplee}[theorem]{Example}
\newenvironment{example}{\begin{examplee}}{\qee\end{examplee}}
\newenvironment{remark}{\begin{remarkee}}{\qee\end{remarkee}}
\numberwithin{equation}{section}
\newcommand\rk{\operatorname{rk}}
\newcommand\Hom{\operatorname{Hom}}
\newcommand\iso{\kern.35em{\raise3pt\hbox{$\sim$}\kern-1.1em\to}\kern.3em}
\renewcommand*\subjclass[2][2020]{%
  \def\@subjclass{#2}%
  \@ifundefined{subjclassname@#1}{%
    \ClassWarning{\@classname}{Unknown edition (#1) of Mathematics
      Subject Classification; using '2020'.}%
  }{%
    \@xp\let\@xp\subjclassname\csname subjclassname@#1\endcsname
  }%
}
\let\@xp\subjclassname\csname subjclassname@2020\endcsname
\begin{document}
\title[Exceptional set of crepant resolutions]{On the exceptional set of crepant  \\[3pt] resolutions of   abelian   singularities}
\author[U. Bruzzo]{Ugo  Bruzzo$^\P$}
\author[F. Arceu Ferreira]{F\'abio  Arceu Ferreira$^\ddag$}

\email{bruzzo@sissa.it}
\email{fabio.arceu.ferreira@gmail.com}

\address{\P\ SISSA (Scuola Internazionale Superiore di Studi Avanzati), Via Bonomea 265, 34136, Trie\-ste, Italia;
INFN (Istituto Nazionale di Fisica Nucleare), Sezione di Trieste; 
IGAP (Institute for Geometry and Physics), Trieste; 
}

\address{\ddag\ PPGMAT, Departamento de Matemática, Universidade Federal da Paraíba, Campus I, Castelo Branco, João Pessoa,  PB 58051-900, Brazil}

\date{\today}

\thanks{\vskip-12pt\P\ Research partly supported by GNSAGA-INdAM and by the PRIN project 2022BTA242  ``Geometry of algebraic structures: moduli, invariants, deformations.''}
\thanks{\ddag\  PhD student at UFPB from 2021 to 2025 supported by FAPESQ-PB (Edital n$^\circ$ 16/2022) and visiting student at SISSA in 2023/2024 supported by CAPES PDSE-202324285975.}

\subjclass{Primary: 14E15; Secondary:
14B05, 14J27, 14M25} 
\keywords{Crepant resolutions, exceptional set, toric geometry, normal bundle}

\begin{abstract} Let \( G \) be a finite abelian subgroup of \( \operatorname{SL}(n, \mathbb{C}) \), and suppose there exists a toric crepant resolution 
$
\phi: X \longrightarrow \mathbb{C}^n / G
$
of the quotient variety \( \mathbb{C}^n / G \). Let 
$
\operatorname{Exc}(\phi) = E_1 \cup \dots \cup E_s
$
 be the decomposition of the exceptional set of $\phi$ into irreducible components. We prove that  for every  $i$  there exists an open subset \( U_i \) of \( X \) such that \( E_i \subset U_i \), and \( U_i \) is isomorphic to the total space of the canonical bundle \( \omega_{E_i} \) of \( E_i \). Furthermore, 
$
X = U_1 \cup \dots \cup U_s.
$
This  contributes  to the collection of results aimed at solving a classical problem, i.e., to determine which submanifolds of a complex manifold have a neighborhood isomorphic to a neighborhood of the zero section of their normal bundle. 
 \end{abstract}
\maketitle 

\tableofcontents

\section{Introduction}
Quotient singularities  $ \mathbb{C}^n / G $, where $ G $ is a finite subgroup of $ \operatorname{SL}(n, \mathbb{C}) $, are an interesting object of study in algebraic geometry due to their rich structure and the various applications in both mathematics and physics (for the latter, see among others \cite{Bruzzo19,Bruzzo19bis,Bianchi21}). As $ G $ is a finite subgroup of $ \operatorname{SL}(n,\mathbb{C}) $, the quotient variety $ \mathbb{C}^n / G $ is a Gorenstein singularity \cite{Yau-Yu,Watanabe74}.
%
For $n=2$ the varieties $\mathbb C^2/G$ 
are known as
Kleinian or Du Val singularities, and are classified by the \( A_n, D_n, \) and \( E_n \) series; they are  deeply connected with the representation theory of simple Lie algebras \cite{Reid02}. 

Given a resolution of singularities $f:X\to\mathbb{C}^{n}/G$, one has the ramification formula 
\begin{eqnarray*}
K_{X}=f^{*} K_{\mathbb{C}^{n}/G}+\sum_{i=1}^{s}a_{i}E_{i},    
\end{eqnarray*}
where $K_{X}$ is a canonical divisor of $X$, $K_{\mathbb{C}^{n}/G}$ is a canonical divisor of $\mathbb{C}^{n}/G$, the divisors $E_{1},\dots, E_{s}$ are  the irreducible components of the exceptional locus $\operatorname{Exc}(f)$ of $f$, and $a_{1},\dots,a_{s}$ are integer numbers. $f$ is said to be a crepant resolution of singularities if $a_{i}=0$ for every $i=1,\dots, s.$  The existence of crepant resolutions is a fundamental problem for quotient singularities. 

For $n=2$  it is known that $\mathbb{C}^{2}/G$ admits a unique  crepant resolution of singularities whose  prime exceptional divisors are all isomorphic to $\mathbb{P}^{1}$ \cite{Durfee79}. 

For \( n = 3 \), the problem of the existence of a crepant resolution was at first  solved   case by case \cite{Ito95,Ito95b,Markushevich97, Roan94, Roan96}, since the conjugacy classes of finite subgroups of $\operatorname{SL}(3,\mathbb{C})$ were
listed (\cite{Yau-Yu}, see also \cite{Carrasco14} for a very clear exposition). Although a  crepant resolution is not necessarily unique, there is a canonical resolution, as shown in \cite{Bridgeland01}, which is the \( G \)-Hilbert scheme $\mathrm{Hilb}^G(\mathbb{C}^3)$, the space  parameterizing all \( G \)-invariant zero-dimensional subschemes of \( \mathbb{A}^3 \) of length 
$
 \#G
$
(the order of \( G \)); the space of global sections of  their structure sheaf is  isomorphic to the regular representation of the group \( G \) as \( G \)-modules. This canonical crepant resolution had already been found when $G$ is   abelian      using   toric geometry \cite{Nakamura01}.
Indeed when  \( G   \) is abelian, \( \mathbb{C}^n / G \) is a toric variety, and toric geometry provides powerful tools to describe both the singularity and its resolutions. 

More generally, the McKay correspondence establishes a striking relationship between the geometry of a crepant resolution and the representation theory of \( G \). This correspondence has been extensively studied in dimensions \( n = 2 \) \cite{Verdier84} and \( n = 3 \) \cite{ItoReid94} and continues to motivate research in higher dimensions \cite{Bridgeland01}.
 
In this paper we  prove the following result. Let $\{g_{1},\dots,g_{s}\}$ be the set of junior classes of $G$. Suppose that there is a Hilbert basis resolution $\phi:X_{\Xi,N}\to\mathbb{C}^{n}/G=U_{\sigma,N}$ of the quotient variety $\mathbb{C}^{n}/G$. The fractional expressions $\hat{g_{1}},\dots,\hat{g}_{s}$ are elements of $\bold{Hlb}_{N}(\sigma)$. Thus  $E_{g_{i}} = V(\operatorname{Cone}(\hat{g_{i}}))$ is a exceptional prime divisor of the resolution  for every $i=1,\dots, s$. We prove that there is an open toric set $U_{i}$ of $X_{\Xi}$ that contains $E_{g_{i}}$ together with a torus invariant divisor $D_{i}$ of $E_{g_{i}}$ such that an isomorphism $\varphi: \operatorname{tot}(\mathcal{O}_{E_{g_{i}}}(D_{i}))\to U_{i}$  exists,  and   is the identity on  the zero section. In particular, when $\mathbb{C}^{n}/G$ admits a crepant resolution we get:  

\noindent\textbf{Theorem \ref{t3.4}.} {\em Let $ G $ be an abelian finite subgroup of $ \operatorname{SL}(n,\mathbb{C}) $ and suppose that there exists a crepant resolution $ \mathbb{\phi}:X_{\Xi}\rightarrow \mathbb{C}^{n}/G=U_{\sigma,N} $ of the quotient variety. If $ g $ is a junior element of $ G $, then $ E_{g} $ is normally embedded in $ X_{\Xi} $. In particular, the total space of the canonical bundle of $ E_{g} $, $ \operatorname{tot}(\omega_{E_{g}}) $, is isomorphic to the toric variety $ X_{\Xi_{g}} $, and
\begin{equation*}
X_{\Xi}=\bigcup_{\hat{g}\in\hat{G}\cap\bigtriangleup_{1}}X_{\Xi_{g}}
\end{equation*}
where $ \Xi_{g} $ is the fan consisting of all the faces of the cones that appear in the set
\begin{equation*}
\Xi_{g}(n):=\{\eta\in\Xi(n)|\rho_{g}\preceq\eta\}.
\end{equation*}
In particular, $ X_{\Xi_{g}} $ is open in $ X_{\Xi} $.}
 
  ``Normally embedded" 
means that $X_{\Xi_{g}}$ is a tubular neighborhood of $E_{g}$ (i.e. it is isomorphic to the total space of the normal bundle $\mathcal{N}_{E_{g}}/X_{\Xi}$). So, this result contributes to the collection of results aimed at solving a classical problem: Determine which subvarieties of algebraic varieties have a neighborhood, isomorphic to the neighborhood of the zero section of their normal bundle. This problem is trivial in the category of real manifolds (see \cite{Hirsch76}, Chapter $4$), but is highly nontrivial in the holomorphic and algebraic settings \cite{MorrowRossi78}. The problem has been
widely studied, starting with the works of Grauert \cite{Grauert62} and Van de Ven \cite{VandeVen59};
see  \cite{AbateBracciTovena09}  for related bibliography. Our Theorem \ref{t3.4} provides a partial solution to a global version of this problem, providing a class of examples in which the subvariety has a neighborhood isomorphic to the whole of its normal bundle, and not just to a neighborhood of its zero section. 

We briefly describe the content of this paper. 
 Section $2$ is a review of known results. 
 We discuss the properties of the quotient variety $\mathbb{C}^{n}/G$ in the case   $G$ is a finite abelian subgroup of $\operatorname{SL}(n,\mathbb{C})$ and how to realize this variety as a toric variety. Applying techniques from toric geometry, we give a recipe for constructing minimal models of $\mathbb{C}^{n}/G$. Hilbert basis resolutions are also discussed. The main references for that section are \cite{Cox11,Dais06,Markushevich87,Yamagishi18,Yam23}.  

In Section $3$ we further  explore the structure of Hilbert basis resolutions and toric crepant resolutions of quotient singularities \(\mathbb{C}^n / G\). The results presented there extend the understanding of the exceptional sets of these resolutions and their relationship to junior elements of \( G \). We begin by proving a connection between the deformation retracts of the exceptional divisors corresponding to junior elements of $G$ and specific toric varieties associated with the fan of the resolution. More precisely, we show that if a junior element of \( G \) corresponds to a ray in the fan of a Hilbert basis resolution of $\mathbb{C}^{n}/G$, the associated irreducible component of the exceptional set is a deformation retract of an open toric subvariety (Theorem \ref{t3.5}). Furthermore, for \( n = 3 \), we identify compact junior elements of \( G \), and show that a toric crepant resolution of \(\mathbb{C}^3 / G\) is a toric glueing of the total spaces of the canonical bundles of the exceptional divisors associated with these compact junior elements (Corollary \ref{cor3.7}).

\medskip
\par \noindent{\bf Acknowledgment.} We thank Michele Graffeo and Dimitri Markushevich for a useful discussions and suggestions. 
The second author thanks SISSA and its Geometry and Mathematical Physics group for the warm hospitality and the pleasant working environment during his visit in the 2023-2024 academic year. 

\bigskip
\section{Gorenstein abelian quotient singularities}

Let $G$ be a finite abelian subgroup of $\operatorname{SL}(n,\mathbb{C})$. This section recalls the properties of the geometric quotient $\mathbb{C}^{n}/G$ and the situations in which it admits a crepant resolution.   Hilbert basis resolutions are also discussed. 

\subsection{$\mathbb{C}^{n}/G$ as a toric variety} Since $G\subset \operatorname{SL}(n,\mathbb{C})$ is abelian and finite, it is conjugated to a diagonal subgroup, so we may think it is diagonal.
Let be $r$ its  order. If $g\in G$, then $g=\operatorname{diag}(\lambda_{1},\dots,\lambda_{n})$ and $g^{r}=Id_{n\times n}$ 
so that $\lambda_{i}$ is a $r$-th root of $1$. Let $\epsilon_{r}$ be a fixed primitive $r$-th root of $1$. There are integers $0\leq a_{1},\dots, a_{n}\leq r-1$ such that $\lambda_{i}=\epsilon_{r}^{a_i}$. Since $\det(g)=1$, it follows that $a_{1}+\dots+a_{n}\equiv 0 \mod r$,  hence $\frac{1}{r}(a_{1}+\dots+a_{n})= m$  for some integer $m\in\{0,1,\dots, n-1\}$.
\begin{definition}
Let $g=\operatorname{diag}(\epsilon_{r}^{a_{1}}, \dots,\epsilon_{r}^{a_{n}})\in G$ as above.
\begin{itemize}
    \item[(a)] The \emph{age} of $g$  is  the number $\operatorname{age}(g):= \frac{1}{r}(a_{1}+\dots+a_{n})$. If $\operatorname{age}(g)=0$ then $g=Id_{n\times n}$. If $\operatorname{age}(g)= 1$, then g is   a \emph{junior element} of $G$. If $\operatorname{age}(g)>1$ then $g$ is   a \emph{senior element} of $G$. 
    \item[(b)] We denote by $\hat{g}$ the $n$-tuple $\frac{1}{r}(a_{1},\dots,a_{n})\in \mathbb{Q}^{n}$ and call it the \emph{fractional expression} of $g$. We denote by $\hat{G}$ the set $\{\hat{g}| g\in G\}\subset\mathbb{Q}^{n}$.
\end{itemize}
\end{definition} 

For $i\in\{1,\dots,n\}$, let $\eta_{i}:G\longrightarrow \mu_{r}$ be the projections, i.e, $$\eta_{i}(\operatorname{diag}(\epsilon_{r}^{a_{1}}, \dots,\epsilon_{r}^{a_{i}},\dots\epsilon_{r}^{a_{n}}))= \epsilon_{r}^{a_{i}}.$$ The characters $\eta_{i}$ define a monomorphism $\eta:G\longrightarrow \mu_{r}^{n}$,  $\eta(g)=(\eta_{1}(g),\dots,\eta_{n}(g))$. 
$G$ is a closed subgroup of $(\mathbb{C}^{*})^{n}$ by the the inclusions 
$$G\overset{\eta}{\longrightarrow}\mu_{r}^{n}\subset (\mathbb{C}^{*})^{n}; $$  
applying the functor $X(\_)$, which takes an algebraic group to the group of its characters,   
one gets a surjective group homomorphism  
$$X((\mathbb{C}^{*})^{n})\longrightarrow X(G).
$$    
One can identify $\mathbb{Z}^{n}$ with $X((\mathbb{C}^{*})^{n})$ by the isomorphism that takes an $n$-tuple of integers $(b_{1},\dots,b_{n})$ to the Laurent monomial $x_{1}^{b_{1}}\dots x_{n}^{b_{n}}$, as usual in toric geometry. In this way, from the      map above  we get the homomorphism
$\psi:\mathbb{Z}^{n}\twoheadrightarrow X(G)$ which takes an $n$-tuple $(b_{1},\dots,b_{n})$ to the character $b_{1}\eta_{1}+\dots+b_{n}\eta_{n}$. Letting $M$ be the kernel of this morphism, we have the exact sequence 
\begin{equation}
0 \longrightarrow M \stackrel{i}{\longrightarrow} \mathbb{Z}^n \stackrel{\psi}{\longrightarrow} X(G) \longrightarrow 0.\label{M}
\end{equation}

Applying the functor \( \operatorname{Hom}(-, \mathbb{Z}) \) to the above sequence and denoting \( N := \operatorname{Hom}(M, \mathbb{Z}) \), we see that \( N \) is a lattice that contains \( \mathbb{Z}^n \). Note that \( M \) is a sublattice of finite index in \( \mathbb{Z}^n \), and it is also the set of invariant monomials for the action of \( G \) on \( \mathbb{C}^n \).

\begin{proposition}
    With the notation above, one has $$N=\mathbb{Z}^{n}+\sum_{g\in G}\mathbb{Z}\hat{g} \quad\text{ and } \quad N/\mathbb{Z}^{n}\cong G.$$ 
\end{proposition}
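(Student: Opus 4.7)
The plan is to identify $N$ and the candidate lattice $N' := \mathbb{Z}^n + \sum_{g \in G}\mathbb{Z}\hat g$ as sublattices of $\mathbb{Q}^n$, establish $N' \subseteq N$ directly from the definition of age, and then promote the inclusion to an equality by comparing the orders of the quotients $N/\mathbb{Z}^n$ and $N'/\mathbb{Z}^n$.

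First I would make the dualization explicit. Because $M$ has finite index in $\mathbb{Z}^n$, applying $\operatorname{Hom}(-,\mathbb{Z})$ to the inclusion $M \hookrightarrow \mathbb{Z}^n$ realizes $N$ as
$$N = \bigl\{v \in \mathbb{Q}^n \,:\, \langle v, m\rangle \in \mathbb{Z}\ \text{for all}\ m \in M\bigr\},$$
with $\mathbb{Z}^n \subseteq N$ the obvious inclusion under the standard pairing. From the sequence (\ref{M}), an $n$-tuple $(b_1,\dots,b_n) \in \mathbb{Z}^n$ lies in $M$ if and only if $b_1\eta_1 + \cdots + b_n\eta_n$ vanishes on $G$, i.e.\ if and only if $\sum_i a_i b_i \equiv 0 \pmod r$ for every $g = \operatorname{diag}(\epsilon_r^{a_1},\dots,\epsilon_r^{a_n}) \in G$. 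But this is exactly the condition that $\hat g = \tfrac{1}{r}(a_1,\dots,a_n)$ pair integrally with $M$, so each $\hat g$ lies in $N$ and $N' \subseteq N$.

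Next I would show that the assignment $g \mapsto \overline{\hat g}$ defines an isomorphism $G \cong N'/\mathbb{Z}^n$. It is a well-defined homomorphism because reducing the coordinate sums $a_i + c_i$ modulo $r$ produces $\widehat{gh} - \hat g - \hat h \in \mathbb{Z}^n$; surjectivity is by construction; and injectivity follows because the normalization $0 \le a_i \le r-1$ forces $\hat g \in \mathbb{Z}^n$ to occur only when $g = \operatorname{Id}$. In particular $|N'/\mathbb{Z}^n| = |G|$.

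To conclude, I would invoke the fact that for a finite-index pair of lattices the index of the sublattice equals the index of the containment between their duals, giving $[N:\mathbb{Z}^n] = [\mathbb{Z}^n:M] = |X(G)| = |G|$ (the middle equality from (\ref{M}), the last because $G$ is finite abelian and hence isomorphic to $X(G)$). Since $N' \subseteq N$ and both quotients by $\mathbb{Z}^n$ have order $|G|$, we must have $N' = N$, which simultaneously establishes both assertions of the proposition. The only point requiring care is the bookkeeping in the dualization — verifying that the inclusion $\mathbb{Z}^n \hookrightarrow N$ obtained functorially from $M \hookrightarrow \mathbb{Z}^n$ coincides with the naive inclusion $\mathbb{Z}^n \subset \mathbb{Q}^n$ intersected with $N$ — but once the standard pairing is fixed this is routine, and no deeper obstacle intervenes.
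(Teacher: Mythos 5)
Your argument is correct, and it closes the gap between $N' := \mathbb{Z}^n + \sum_{g\in G}\mathbb{Z}\hat g$ and $N$ by a different mechanism than the paper. Both proofs start identically: compute that $m=(b_1,\dots,b_n)\in M$ if and only if $\langle m,\hat g\rangle\in\mathbb{Z}$ for every $g\in G$, and deduce $\mathbb{Z}^n\subseteq N'\subseteq N$. The paper then stays entirely on the dual side: it lets $M'$ be the dual lattice of $N'$, observes $M\subseteq M'\subseteq\mathbb{Z}^n$ and that every element of $M'$ pairs integrally with each $\hat g$ and hence lies in $\ker\psi=M$, so $M=M'$ and therefore $N=N'$ by biduality --- no counting is needed, but the isomorphism $N/\mathbb{Z}^n\cong G$ is then only asserted via $g\mapsto\overline{\hat g}$. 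You instead count: you prove directly that $g\mapsto\overline{\hat g}$ is a well-defined group isomorphism $G\to N'/\mathbb{Z}^n$ (checking the homomorphism property via $\widehat{gh}-\hat g-\hat h\in\mathbb{Z}^n$ and injectivity via the normalization $0\le a_i\le r-1$), and separately that $[N:\mathbb{Z}^n]=[\mathbb{Z}^n:M]=|X(G)|=|G|$ using the index-preservation of lattice duality and the exact sequence \eqref{M}; equality of the two indices forces $N'=N$. Your route costs one extra standard lemma (equality of indices under dualization) and the verification that $g\mapsto\overline{\hat g}$ is a homomorphism, but it buys an explicit proof of the second assertion $N/\mathbb{Z}^n\cong G$ rather than leaving it implicit, and it makes the role of $|G|$ transparent. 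Both are complete; there is no gap in yours.
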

\begin{proof}
   By definition, $(b_{1},\dots,b_{n})\in M$ if and only if $b_{1}\eta_{1}+\dots+b_{n}\eta_{n}=1$, and this is true if and only if for every element $g=\operatorname{diag}((\epsilon_{r}^{a_{1}}, \dots,\epsilon_{r}^{a_{n}})\in G$
   $$\sum_{i=1}^{n}b_{i}a_{i}\equiv 0 \mod r.$$
   Thus 
   \begin{eqnarray}
   M=\{m\in\mathbb{Z}^{n}| \langle m,\hat{g}\rangle\in\mathbb{Z} \text{ for every } \hat{g}\in\hat{G}\}.    
   \end{eqnarray}
 Let $N'$ denote $\mathbb{Z}^{n}+\sum_{g\in G}\mathbb{Z}\hat{g}$ and let $M'$ be the dual of $N'$. By the above description of $M$, we have $\mathbb{Z}^{n}\subset N'\subset N$ and $M\subset M'\subset \mathbb{Z}^{n}$. Note that every element of $M'$ is in the kernel $M$ of $\psi$. Therefore, $M=M'$ and $N=N'$. In particular, $N/\mathbb{Z}^{n}\cong G$ where the isomorphism assigns an element $g=\operatorname{diag}(\epsilon^{a_{1}}, \dots, \epsilon^{a_{n}})\in G$ to the class of its fractional expression $\hat{g}=\frac{1}{r}(a_{1},\dots,a_{n})$.    
\end{proof}

Let $\{e_{1},\dots, e_{n}\}$ be the standard basis of $\mathbb{R}^{n}$ and $\sigma:=\operatorname{Cone}(e_{1},\dots,e_{n})$.
The fact that $N_{\mathbb{R}}=\mathbb{R}^{n}$ implies the existence of a toric variety $U_{\sigma, N}$ with torus $T_{N}$. In the next proposition, we check that $\mathbb{C}^{n}/G$ is, in fact, $U_{\sigma,N}$.
\begin{proposition} $\mathbb{C}^{n}/G=U_{
    \sigma, N}=\operatorname{Specm}(\mathbb{C}[\sigma^{\vee}\cap M])$. 
\end{proposition}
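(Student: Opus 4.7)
The plan is to reduce the statement to the standard identification of the coordinate ring of an affine toric variety with the semigroup algebra of its dual cone, after identifying $\mathbb{C}[\mathbb{C}^n]^G$ with $\mathbb{C}[\sigma^\vee\cap M]$.

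First, since $G$ is finite and acts on $\mathbb{C}^n$, the geometric quotient exists and is the affine variety whose coordinate ring is the invariant subring
\[
\mathbb{C}[\mathbb{C}^n/G] \;=\; \mathbb{C}[x_1,\dots,x_n]^G.
\]
Because $G$ acts diagonally, each monomial $x^b = x_1^{b_1}\cdots x_n^{b_n}$ is an eigenvector for the $G$-action with character $\psi(b) = b_1\eta_1+\dots+b_n\eta_n \in X(G)$. Therefore $x^b$ is $G$-invariant if and only if $\psi(b)=0$, i.e.\ if and only if $b\in M$, where $M = \ker\psi$ is the lattice appearing in \eqref{M}.

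Second, since $G$ is finite (hence linearly reductive in characteristic zero), the Reynolds operator decomposes $\mathbb{C}[x_1,\dots,x_n]$ as a direct sum of its isotypical components; the invariant subring is spanned as a $\mathbb{C}$-vector space by the $G$-invariant monomials. Combining this with the previous step yields
\[
\mathbb{C}[x_1,\dots,x_n]^G \;=\; \mathbb{C}\bigl[\mathbb{Z}^n_{\geq 0}\cap M\bigr].
\]
Since $\sigma = \operatorname{Cone}(e_1,\dots,e_n)$ is the positive orthant, its dual cone $\sigma^\vee\subset M_\mathbb{R} = N_\mathbb{R}$ also equals the positive orthant, and $\sigma^\vee\cap M = \mathbb{Z}^n_{\geq 0}\cap M$. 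Thus $\mathbb{C}[\mathbb{C}^n/G] = \mathbb{C}[\sigma^\vee\cap M]$, so
\[
\mathbb{C}^n/G \;=\; \operatorname{Specm}\bigl(\mathbb{C}[\sigma^\vee\cap M]\bigr).
\]

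Third, by the standard construction of affine toric varieties (see, e.g., \cite{Cox11}), the strongly convex rational polyhedral cone $\sigma\subset N_\mathbb{R}$, together with the lattice $N$ dual to $M$, produces the affine toric variety
\[
U_{\sigma,N} \;=\; \operatorname{Specm}\bigl(\mathbb{C}[\sigma^\vee\cap M]\bigr),
\]
with torus $T_N = \operatorname{Hom}(M,\mathbb{C}^*)$. Comparing the two displayed equalities gives the required identification $\mathbb{C}^n/G = U_{\sigma,N}$.

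There is no genuinely hard step in this proof: the only nontrivial input is the description of invariants for a finite diagonal group action, i.e.\ that an element of $\mathbb{C}[x_1,\dots,x_n]$ is $G$-invariant if and only if all of its monomial components have exponent in $M$. Everything else is a direct bookkeeping comparison between the invariant-theoretic and toric descriptions of the same affine variety.
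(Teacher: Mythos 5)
Your proof is correct, but it takes a genuinely different route from the paper's. You argue via classical invariant theory: since $G$ acts diagonally, the monomials form an eigenbasis of $\mathbb{C}[x_1,\dots,x_n]$, so the invariant ring is spanned by the invariant monomials, which are precisely those $x^b$ with $b\in M=\ker\psi$; combined with $\sigma^\vee\cap M=\mathbb{Z}^n_{\geq 0}\cap M$ this gives $\mathbb{C}[x_1,\dots,x_n]^G=\mathbb{C}[\sigma^\vee\cap M]$ and hence the claim. The paper instead goes through Cox's homogeneous coordinate construction: it realizes $U_{\sigma,N}$ as a geometric quotient $(\mathbb{C}^{\Sigma(1)}\setminus Z(\Sigma))/G'$ with $G'=\operatorname{Hom}_{\mathbb{Z}}(\operatorname{Cl}(X_\Sigma),\mathbb{C}^*)$, checks that $Z(\Sigma)=\emptyset$, and then proves $G=G'$ by exhibiting the inclusion $G\subset G'$ and comparing orders via the exact sequence $0\to M\to \operatorname{Div}_{T_N}(X_\Sigma)\to\operatorname{Cl}(X_\Sigma)\to 0$ together with \eqref{M}. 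Your argument is shorter and more elementary, it directly yields the ring identity $\mathbb{C}[x_1,\dots,x_n]^G=\mathbb{C}[\sigma^\vee\cap M]$ which the paper only records afterwards as a consequence, and it makes transparent that the quotient map $\pi$ is the toric morphism induced by $M\subset\mathbb{Z}^n$. What the paper's approach buys in exchange is the identification $X(G)\cong\operatorname{Cl}(U_{\sigma,N})$ and the realization of $G$ as the Cox quotient group, which is the language used later in the paper (e.g.\ when exceptional divisors are described in homogeneous coordinates). The one point worth stating explicitly in your write-up is that the isomorphism you obtain between $\mathbb{C}^n/G$ and $U_{\sigma,N}$ is induced by the literal inclusion of semigroup algebras, so it is the canonical one and not merely an abstract isomorphism of affine varieties.
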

\begin{proof}
Using the theory of homogeneous coordinates for toric varieties \cite{Cox95} we   express $U_{\sigma,N}$ in homogeneous coordinates. Let $(\Sigma,N)$ be the fan that consists of $\sigma$ and its faces. Thus, $X_{\Sigma,N}=U_{\sigma,N}$ is a simplicial toric variety with no torus factors, and in this case $X_\Sigma$ is a geometric quotient $(\mathbb{C}^{\Sigma(1)} \setminus Z(\Sigma)) / G'$, where $G'=\operatorname{Hom}_\mathbb{Z}(\operatorname{Cl}(X_\Sigma),\mathbb{C}^{*})\subset(\mathbb{C}^{*})^{n}$ and $Z(\Sigma)$ is the set of zeros of the irrelevant ideal $B(\Sigma)$ of $\Sigma$. 
The total coordinate ring of $X_{\Sigma,N}$ is the polynomial ring $\mathbb{C}[x_{1},\dots,x_{n}]$,  where $x_{i}$ is the variable corresponding to the ray $\rho_{i}$. The unique maximal cone of $\Sigma$   is $\sigma$, hence the irrelevant ideal is   $B(\Sigma)=\langle x_{\hat{\sigma}}\rangle$ where 
\[
x_{\widehat{\sigma}} = \prod_{\rho \not\in \sigma(1)} x_\rho.
\] However, since $\Sigma(1)$ consists only of rays of $\sigma$, $x_{\hat{\sigma}}=1$. In this way, $Z(\Sigma)=\emptyset$ and then $X_{\Sigma}=\mathbb{C}^{n}/G'$. The group $G'$ consists of the points $(x_{1},\dots,x_{n})$ of $(\mathbb{C}^{*})^{n}$ such that, for every $m=(b_{1},\dots b_{n})\in M$, $$1=x_{1}^{\langle
e_{1},m\rangle} \dots x_{1}^{\langle
e_{1},m\rangle}=x_{1}^{b_{1}}\dots x_{n}^{b_{n}}.$$ 
all of the elements of $G$ satisfy this equation, hence $G\subset G'$. Since $X_{\Sigma,N}$ has not torus factors, the sequence 
\begin{eqnarray*}
  0 \longrightarrow M \longrightarrow \operatorname{Div}_{T_{N}}(X_{\Sigma,N}) \longrightarrow \operatorname{Cl}(X_{\Sigma,N}) \longrightarrow 0  
\end{eqnarray*}
is exact,
so that by \eqref{M}  $X(G)\cong\operatorname{Cl}(X_{\Sigma,N})$. Since a finite group has the same order of its group of characters, we have $$|G|=|X(G)|=|\operatorname{Cl}(X_{\Sigma,N})|=|G'|.$$
Hence $G=G'$ and therefore $U_{\sigma,N}=\mathbb{C}^{n}/G$. 
\end{proof}

Thus $\mathbb{C}[x_{1},\dots,x_{n}]^{G}=\mathbb{C}[\sigma^{\vee}\cap M]$. In particular, the canonical morphism $\pi:\mathbb{C}^{n}\rightarrow \mathbb{C}^{n}/G=U_{\sigma,N}$ is the toric morphism coming from the inclusion $\mathbb{Z}^{n}\subset N$. Moreover $\pi((\mathbb{C}^{*})^{n})=(\mathbb{C}^{*})^{n}/G=T_{N}$, and if $\tau$ is a face of $\sigma$ then $\pi(O_{\mathbb{Z}^{n}}(\tau))=O_{\mathbb{Z}^{n}}(\tau)/G=O_{N}(\tau)$ is the orbit in $U_{\sigma,N}$ corresponding to $\tau$. Another way to see this is to check that the action of $G$ on $\mathbb{C}^{n}$ commutes with the action of the torus $(\mathbb{C}^{*})^{n}$. 

 \begin{remark}\label{Qif}
  The pair $(\sigma,\mathbb{Z}^{n})$ is a smooth cone, but the pair $(\sigma,N)$ is no longer smooth, as it corresponds to the quotient variety $\mathbb{C}^{n}/G$. However, the ray generators $e_{1},\dots,e_{n}$ of $\sigma$ remain linearly independent in $N_{\mathbb{R}}=\mathbb{R}^{n}$, and hence $(\sigma,N)$ is simplicial. Thus, $\mathbb{C}^{n}/G$ is a $\mathbb{Q}$-factorial variety.   
 \end{remark}
\begin{remark}
The divisor $K_{U_{\sigma,N}}=-(D_{\rho_{e_{1}}}+\dots+D_{\rho_{e_{n}}})$, where $\rho_{e_{i}}=\operatorname{Cone}(e_{i})$, 
is a canonical divisor of $\mathbb{C}^{n}/G=U_{\sigma,N}$. Since the monomial $x_{1}\dots x_{n}$ is $G$-invariant, $m=(1,\dots,1)\in \sigma^{\vee}\cap M$. The support function of $K_{U_{\sigma,N}}$ is given by $\varphi_{K_{U_{\sigma,N}}}(u)=\langle m,u\rangle$ for every $u\in\sigma$. In particular, $K_{U_{\sigma,N}}$ is a principal divisor, and this also shows that $\mathbb{C}^{n}/G$ is, in fact, a Gorenstein variety.  
\end{remark}

The set $$ \square:=\{(\alpha_{1},\dots,\alpha_{n})\in \mathbb{R}^{n}| 0\leq\alpha_{i}<1\}$$ is a fundamental domain for the action of $\mathbb{Z}^{n}$ on $\mathbb{R}^{n}$, so that  every element of $G$ has a unique representative in $\square$, i.e, $N\cap\square=\{\hat{g}|g\in G\}:=\hat{G}$. 
In particular,  the  unique representations
of  the junior classes of $G$ are  contained in the slice 
$$\{(\alpha_{1},\dots,\alpha_{n})\in\square\,\bigm\vert\, \alpha_{1}+\dots+\alpha_{n}=1\}.$$

 Let $H_{m,1}$ be affine hyperplane     $\langle m,x\rangle=1$. Note that
    $$H_{m,1}\cap \sigma=\operatorname{Conv}(e_{1},\dots, e_{n}):=\bigtriangleup$$
and the junior elements of $G$ have their fractional expressions contained in $\bigtriangleup$. Moreover, $$N\cap\bigtriangleup=\{\hat{g}\in \hat{G}|\operatorname{age}(g)=1\}\cup\{e_{1},\dots,e_{n}\}:=\nu_{G}.$$
The set $\nu_{G}$ is called the \emph{junior simplex} of $G$ and the picture of the points of $\nu_{G}$ in $\bigtriangleup$ is called  the \emph{graph} of $G$.

\vspace{0.3cm}
Before giving an example that illustrates the discussion above, it is important to define an   interesting kind of Gorenstein quotient singularity,  those that admit the canonical Fujiki-Oka resolutions (see \cite{Sato21}). This singles out a class of quotient singularities that are easier to work with. 

\begin{definition}
  An element $\frac{1}{r}(a_{1},\dots,a_{n})\in\hat{G}$ is called \emph{semi-unimodular} if at least one of the $a_{i}$'s  is $1$. Suppose $G$ is a cyclic group generated by an element $g$ corresponding to a semi-unimodular element of $\hat{G}$. Up to a change of coordinates, that element can be written as $\hat{g}=\frac{1}{r}(1,a_{1},\dots,a_{n-1})$. In this case, $G$ is denoted by $\mathbb{Z}_{r,(1,a_{1},\dots,a_{n-1})}$ and  $\mathbb{C}^{n}/G$ is said to be a \emph{cyclic quotient singularity} of $\frac{1}{r}(1,a_{1},\dots,a_{n-1})-$type.   
\end{definition}
\begin{example}\label{Z_6,1}
 Denote by $\mathbb{Z}_{6,(1,2,3)}$  the cyclic subgroup of $\operatorname{SL}(3,\mathbb{C})$ generated by the diagonal matrix
 
 \[g_{1}=
\begin{bmatrix}
 \epsilon_{6} & 0 & 0 \\
0 & \epsilon_{6}^{2} & 0 \\
0 & 0 & \epsilon_{6}^{3}
\end{bmatrix}
.\]
$\mathbb{C}^{3}/\mathbb{Z}_{6,(1,2,3)}$ is the toric variety $U_{\sigma,N}$ where $N=\mathbb{Z}^{3}+\mathbb{Z}\frac{1}{6}(1,2,3)$ and $\sigma=\operatorname{Cone}(e_{1},e_{2},e_{3})$. The junior elements of this group are $g_{i}:=g_{1}^{i}$ for $i\in\{1,2,3,4\}$ and its unique senior element is $g_{5}:=g_{1}^{5}$. Their representations $\hat{g_{1}}=\frac{1}{6}(1,2,3),\hat{g_{2}}=\frac{1}{6}(2,4,0),\hat{g_{3}}=\frac{1}{6}(3,0,3),\hat{g_{4}}=\frac{1}{6}(4,2,0)$ in $\bigtriangleup$ provide the    graph of $\mathbb{Z}_{6,(1,2,3)}$:
 \begin{center}
\begin{tikzpicture}[scale=0.50]
 	\draw (0,0) -- (8,0); 
 	\draw (0,0) -- (4,6.9); 
 	\draw (8,0) -- (4,6.9);   

 	\draw [fill] (0,0) circle (3pt); 
 	\draw [fill] (8,0) circle (3pt); 
 	\draw [fill] (4,6.9) circle (3pt); 

 	\draw [fill] (4.3,3.45) circle (3pt); 
 	\draw [fill] (5.33,0) circle (3pt); 
 	\draw [fill] (2,3.45) circle (3pt); 
 	\draw [fill] (2.67,0) circle (3pt); 

 	\node at (-0.5,0) {$\mathbf e_1$}; 
 	\node at (8.6,0) {$\mathbf e_2$}; 
 	\node at (4.2,7.4) {$\mathbf e_3$}; 
 	
 	\node at (4.9,3.45) {$\small\hat{g_{1}}$};
 	\node at (2.67,-0.6) {$\hat{g_{4}}$}; 
 	\node at (5.33,-0.6) {$\hat{g_{2}}$};
 	\node at (1.4,3.45) {$\hat{g_{3}}$};

\end{tikzpicture}
\end{center}
The singular faces of $\sigma$ with respect to $N$ are $\tau_{1}=\operatorname{Cone}(e_{1},e_{2}), \tau_{2}=\operatorname{Cone}(e_{1},e_{3})$ and $\sigma$ itself. This way, the set o singular points of $\mathbb{C}^{3}/\mathbb{Z}_{6,(1,2,3)}$ is given by  $$(\mathbb{C}^{3}/\mathbb{Z}_{6,(1,2,3)})_{\operatorname{sing}}=V_{N}(\tau_{1})\cup V_{N}(\tau_{2})\cup V_{N}(\sigma)= \pi(C_{1})\cup\pi(C_{2})\cup\pi(\{ 0 \})$$
 where $V_{N}(\tau_{i})$ denotes closure of the toric orbit $O_{N}(\tau_{i})$, while  $C_{1}=\{(x,y,z)\in\mathbb{C}^{3}|x=z=0\}$, $C_{2}=\{(x,y,z)\in\mathbb{C}^{3}|x=y=0\}$, and $\pi:\mathbb{C}^{3}\rightarrow \mathbb{C}^{3}/\mathbb{Z}_{6,(1,2,3)}$ is the canonical morphism.
  \end{example}
  
Note that in the previous example the singular faces of $\sigma$ are those that contain at least a fractional expression of a junior element of $\mathbb{Z}_{6,(1,2,3)}$ in their relative interior. In the next section, we will see that this is not a coincidence.

\subsection{Crepant resolutions} We start this section by  recalling the definition of minimal model for the quotient variety $\mathbb{C}^{n}/G$.
\begin{definition}
    A \emph{minimal model} of \(\mathbb{C}^{n}/G\) is a \(\mathbb{Q}\)-factorial normal variety \(X\) which
has only terminal singularities together with a crepant proper birational morphism 
  $X \;\longrightarrow\; \mathbb{C}^{n}/G.$
\end{definition}

In \cite[Sec.~3.3]{Yam23} it was proved that any minimal model of $\mathbb{C}^{n}/G$ is toric. In particular, if $\mathbb{C}^{n}/G$ admits a crepant resolution $X\to\mathbb{C}^{n}/G$, then $X$ should be toric, because any smooth variety has terminal singularities (see \cite[p.~107]{Ishii18}). In this way, following the steps described in \cite{Dais06} and in the Appendix of \cite{Markushevich87}, we can give the recipe to get (toric) minimal models for $\mathbb{C}^{n}/G$. In dimensions $2$ and $3$ any minimal model for such quotient variety is a crepant resolution.

We say that a fan  $(\Sigma',L)$ refines (or subdivides) a fan  $(\Sigma,L)$ 
 if  $\lvert \Sigma' \rvert = \lvert\Sigma \rvert$ 
 and every cone of  $\Sigma'$  is contained in a cone of  $\Sigma$. In this situation, the identity map  $\mathrm{id} \colon L \longrightarrow L$ 
induces a toric morphism $\phi: X_{\Sigma'} \longrightarrow X_{\Sigma}$, which is proper  and is birational since it is the identity on $T_{L}$. There is a way to construct  a nice refinements of a fan, which is defined as follows. 
\begin{definition}
    Let $\Xi$ be a fan in $L_{\mathbb{R}}$. Given a primitive element $\mu \in |\Xi|\cap L$, the \emph{star subdivision} of $\Xi$ at $\mu$ is the fan denoted by $\Xi^*(\mu)$ and defined  as the set containing the following cones:
    \begin{itemize}
        \item[$(a)$]  $\xi\in\Xi$, where $\mu\not\in \xi$;
        \item[$(b)$]  $\operatorname{Cone}(\tau,\mu)$, where $\mu\not\in\tau\in\Xi$ and $\{\mu\}\cup\tau\subset \xi\in\Xi$. 
    \end{itemize}
    \end{definition}
    The main properties of the star subdivision are the following:
    \begin{itemize}
        \item the $1$-dimensional cones of $\Xi^*(\mu)$ are the 1-dimensional cones of $\Xi$, plus the cone generated by $\mu$;
        \item $\Xi^*(\mu)$ is a refinement of $\Xi$;
        \item the corresponding toric morphism $X_{\Xi^*(\mu)}\rightarrow X_{\Xi}$ is projective;
        \item if $\Xi$ is simplicial then $\Xi^*(\mu)$ is simplicial as well.
    \end{itemize}

\begin{lemma}
Suppose that $X_{\Sigma}$ is a Gorenstein toric variety. Let $\varphi$ be the support function of the Cartier divisor $K_{X_\Sigma}$, 
and let $\phi \colon X_{\Sigma'} \longrightarrow X_{\Sigma}$ be the toric morphism 
coming from a refinement $\Sigma'$ of $\Sigma$. Then one has the toric ramification formula
\[
  K_{X_{\Sigma'}} 
  \;=\; 
  \phi^*K_{X_\Sigma} 
    \;+\;
    \sum_{\rho \in \Sigma'(1)\setminus\Sigma(1)} 
      \bigl(\varphi(u_\rho) - 1 \bigr)\,D_\rho.
\]
\end{lemma}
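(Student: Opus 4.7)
The plan is to exploit the fact that a Gorenstein toric variety has its canonical divisor Cartier, hence described by a single piecewise linear support function, and then to compare the pullback of that divisor to the canonical divisor on $X_{\Sigma'}$ ray by ray.

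First, I would set up notation. Since $X_\Sigma$ is Gorenstein, the toric canonical divisor
\[
K_{X_\Sigma} = -\sum_{\rho\in\Sigma(1)} D_\rho
\]
is Cartier, so it admits a support function $\varphi\colon |\Sigma|\to \mathbb{R}$ that is integral and linear on each cone of $\Sigma$. Using the standard identification between Cartier torus-invariant divisors and support functions (the divisor $\sum a_\rho D_\rho$ corresponds to the support function whose value on $u_\rho$ is $-a_\rho$), the identity $K_{X_\Sigma} = -\sum_\rho D_\rho$ immediately gives $\varphi(u_\rho)=1$ for every $\rho\in\Sigma(1)$. This is the key numerical input.

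Next, I would pull back. Because $\phi\colon X_{\Sigma'}\to X_\Sigma$ arises from the identity on the lattice $L$, the pullback of a Cartier divisor is computed simply by re-reading its support function on the finer fan: since $|\Sigma'|=|\Sigma|$, the same function $\varphi$ is piecewise linear on $\Sigma'$ and represents the Cartier divisor $\phi^*K_{X_\Sigma}$. Hence
\[
\phi^*K_{X_\Sigma} \;=\; -\sum_{\rho'\in\Sigma'(1)} \varphi(u_{\rho'})\, D_{\rho'}.
\]
Combining this with $K_{X_{\Sigma'}} = -\sum_{\rho'\in\Sigma'(1)} D_{\rho'}$ and taking the difference,
\[
K_{X_{\Sigma'}} - \phi^*K_{X_\Sigma} \;=\; \sum_{\rho'\in\Sigma'(1)} \bigl(\varphi(u_{\rho'})-1\bigr)\, D_{\rho'}.
\]

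Finally, I would observe that for $\rho\in\Sigma(1)\cap\Sigma'(1)$ we showed $\varphi(u_\rho)=1$, so the corresponding coefficient is zero and those terms drop out, leaving exactly the formula in the statement. The only subtle point — and really the only point where something could go wrong — is the sign convention relating Cartier divisors to their support functions (and the fact that $\phi^*$ is faithfully computed by re-reading the support function on the subdivision); once this is fixed, the argument is a direct bookkeeping computation with no further obstacle.
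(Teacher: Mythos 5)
Your proof is correct and follows essentially the same route as the paper: both read off $\varphi(u_\rho)=1$ on the rays of $\Sigma$, express $\phi^*K_{X_\Sigma}$ via the same support function re-read on the refinement as $-\sum_{\rho\in\Sigma'(1)}\varphi(u_\rho)D_\rho$, and subtract. Your write-up merely makes explicit the sign convention and the cancellation of the old rays, which the paper leaves implicit.
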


\begin{proof}
Note that that $K_{X_\Sigma}$ and its pullback $\phi^*K_{X_\Sigma}$ 
have the same support function. Thus
\[
  \phi^*K_{X_\Sigma}
  \;=\; 
  -\sum_{\rho \in \Sigma'(1)} \varphi(u_\rho)\,D_\rho.
\]
The required formula now follows at once, relying on the fact that 
$\varphi(u_\rho) = 1$ for each $\rho \in \Sigma(1)$.
\end{proof}

\medskip

\begin{proposition}
If  $X_{\Sigma,L}$ is a normal Gorenstein toric variety, $X_\Sigma$ has 
canonical singularities.
\end{proposition}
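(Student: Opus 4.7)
The plan is to exhibit a smooth toric refinement $\Sigma'$ of $\Sigma$ and read off the discrepancies directly from the ramification formula just proved. By the standard toric resolution of singularities (iterated star subdivisions at suitable lattice points in the supports of the cones), such a smooth fan $\Sigma'$ always exists and produces a projective birational morphism $\phi\colon X_{\Sigma'} \to X_\Sigma$ which is a resolution of $X_\Sigma$. Since having canonical singularities can be tested on a single resolution, it is enough to check that each coefficient $\varphi(u_\rho) - 1$ appearing in
\[
  K_{X_{\Sigma'}} = \phi^* K_{X_\Sigma} + \sum_{\rho \in \Sigma'(1)\setminus\Sigma(1)} \bigl(\varphi(u_\rho) - 1\bigr)\, D_\rho
\]
is nonnegative, where $\varphi$ denotes the support function of $K_{X_\Sigma}$.

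The crucial step is to verify that $\varphi(u_\rho) \geq 1$ for every new ray generator $u_\rho \in L \cap |\Sigma|$. Fix a maximal cone $\sigma \in \Sigma$ with $u_\rho \in \sigma$. The Gorenstein hypothesis means that $K_{X_\Sigma}$ is Cartier, hence $\varphi|_\sigma = \langle m_\sigma, \cdot\rangle$ for some $m_\sigma \in M$ satisfying $\langle m_\sigma, u_{\rho'}\rangle = 1$ for every ray $\rho' \in \sigma(1)$. The hyperplane $\{\langle m_\sigma, \cdot\rangle = 0\}$ therefore meets $\sigma$ only at the origin, because $\sigma$ is generated by vectors lying on $\{\langle m_\sigma, \cdot\rangle = 1\}$. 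Consequently $\varphi(u_\rho) > 0$, and since $m_\sigma \in M$ and $u_\rho \in L$ this pairing is an integer, forcing $\varphi(u_\rho) \geq 1$.

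Combining the two steps yields $K_{X_{\Sigma'}} - \phi^* K_{X_\Sigma} \geq 0$, i.e.\ all discrepancies are nonnegative integers, which is exactly the condition that $X_\Sigma$ has canonical singularities. I do not foresee a substantial obstacle: the only mild subtlety is that $u_\rho$ may lie on the boundary of several maximal cones, but the inequality $\varphi(u_\rho) \geq 1$ is intrinsic, depending only on $u_\rho$ and on the single globally defined support function $\varphi$, so the choice of the auxiliary $\sigma$ is immaterial. The argument is essentially a short verification once the toric ramification formula of the previous lemma and the existence of a smooth toric refinement are in hand.
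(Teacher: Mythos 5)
Your proposal is correct and follows essentially the same route as the paper: pass to a smooth refinement, invoke the toric ramification formula, and deduce $\varphi(u_\rho)\ge 1$ from the integrality of the support function on $L$ (Gorenstein hypothesis) together with its strict positivity on each cone, which forces all discrepancies to be nonnegative. The only difference is cosmetic — you spell out the positivity via the hyperplane $\{\langle m_\sigma,\cdot\rangle=0\}$ meeting $\sigma$ only at the origin, which the paper leaves implicit.
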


\begin{proof}
Observe first that the support function \(\varphi\) associated with \(K_{X_\Sigma}\) 
is integral along \(L\) by virtue of \(X_\Sigma\) being Gorenstein.  
Next, choose a smooth refinement \(\Sigma'\) of \(\Sigma\).  
Then for each \(u_\rho \in \Sigma'(1)\), the following properties hold:
\begin{enumerate}
\item 
\(\varphi(u_\rho) \in \mathbb{Z}\), since \(u_\rho\) lies in \(\lvert\Sigma\rvert \cap N\).
\item 
\(\varphi(u_\rho) > 0\), because \(u_\rho\) lies in some cone \(\sigma \in \Sigma\), 
and \(\varphi\) takes the value \(1\) on each minimal generator of \(\sigma\). 
It follows that \(\varphi\) remains strictly positive on \(\sigma\).
\end{enumerate}
Hence \(\varphi(u_\rho) \ge 1\).  By the ramification formula for toric morphisms, we conclude that \(X_\Sigma\) must have canonical singularities.
\end{proof}

 Bearing this result in mind, it follows that $\mathbb{C}^{n}/G$ has canonical singularities since it is a Gorenstein toric variety. The next proposition gives us the ideia of which points we should choose to refine the fan of $\mathbb{C}^{n}/G$ to get a minimal model for it.

\begin{proposition}    \cite[Prop.~11.4.12]{Cox11} \label{p.c.T}
Let $L$ be a lattice and let $\xi\subset L_{\mathbb{R}}$ be a simplicial strongly convex rational polyhedral cone with ray generators $u_{1},\dots u_{d}$. Define $$\Psi_{\xi}=\operatorname{Conv}(0,u_{i}|i=1,\dots d).$$
 	If $U_{\xi}$ is $\mathbb{Q}$-Gorenstein, then
    \begin{itemize}
 		\item[(i)] $\Psi_{\xi}$ has a unique facet not containing the origin.
 		\item[(ii)] $U_{\xi,L}$ has terminal singularities if and only if the only lattice points of $\Psi_{\xi}$ are given by its vertices.
 		\item[(iii)] $U_{\xi,L}$ has canonical singularities if and only if the only nonzero lattice points of  $\Psi_{\xi}$ lie in the facet not containing the origin.
 	\end{itemize} 
 \end{proposition}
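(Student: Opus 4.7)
The plan is to use the support function of the canonical divisor as the bridge between the combinatorics of $\Psi_\xi$ and the singularities of $U_\xi$. Since $\xi$ is simplicial, its ray generators $u_1,\dots,u_d$ are linearly independent in $L_\mathbb{R}$, so there is a unique linear function $\varphi$ on the span of $\xi$ with $\varphi(u_i)=1$ for all $i$; the $\mathbb{Q}$-Gorenstein hypothesis amounts to saying that some positive integer multiple $\ell\varphi$ takes integer values on $L$, so that $\ell\varphi$ realizes the support function of $\ell K_{U_\xi}$ on $\xi$. In particular $\varphi$ is strictly positive on $\xi\setminus\{0\}$.

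For (i), I would observe that $\Psi_\xi$ is a $d$-simplex with vertices $0,u_1,\dots,u_d$, so it has exactly $d+1$ facets; the unique one that omits the origin is $\operatorname{Conv}(u_1,\dots,u_d)$, and it lies in the affine hyperplane $\{\varphi=1\}$. A barycentric-coordinate computation gives
\[
\Psi_\xi \;=\; \{\,v\in\xi : \varphi(v)\le 1\,\},
\]
so that a lattice point $v\in\xi\cap L$ belongs to $\Psi_\xi$ exactly when $\varphi(v)\le 1$, and lies on the distinguished facet exactly when $\varphi(v)=1$.

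The substance of the argument is (ii) and (iii). I would fix a smooth refinement $\Sigma'$ of $\xi$ (which exists since simplicial fans admit smooth refinements) and apply the toric ramification formula of the preceding lemma: the discrepancy of $\phi\colon X_{\Sigma'}\to U_\xi$ along $D_\rho$ equals $\varphi(u_\rho)-1$ for each new primitive generator $u_\rho\in\Sigma'(1)\setminus\xi(1)$. Hence $U_\xi$ has terminal (resp.\ canonical) singularities iff, for some (equivalently every) smooth refinement, every new ray generator satisfies $\varphi(u_\rho)>1$ (resp.\ $\ge 1$). A standard toric fact then promotes this to the analogous inequality for every primitive $v\in\xi\cap L\setminus\{u_1,\dots,u_d\}$, because any such $v$ can be realized as a ray in some smooth refinement (start from the star subdivision at $v$ and further refine).

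Combining the ingredients, terminality of $U_\xi$ is equivalent to the assertion that no primitive $v\in\xi\cap L\setminus\{u_1,\dots,u_d\}$ has $\varphi(v)\le 1$. The main subtlety, and the step requiring the most care, is the passage from primitive lattice points to arbitrary non-vertex lattice points of $\Psi_\xi$: if a non-vertex $v\in\Psi_\xi\cap L$ with $v\neq 0$ existed, its primitive divisor $v'=v/k$ would satisfy $\varphi(v')=\varphi(v)/k\le 1$, hence $v'\in\Psi_\xi\setminus\{0\}$, and $v'$ could not coincide with any $u_i$ (otherwise $v=ku_i$ with $k\ge 2$ would force $\varphi(v)=k\ge 2$, contradicting $v\in\Psi_\xi$). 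This primitivity reduction delivers (ii); the same argument with strict inequalities replaced by non-strict ones, together with the identification $\{\varphi=1\}\cap\xi=\operatorname{Conv}(u_1,\dots,u_d)$ supplied by (i), yields (iii).
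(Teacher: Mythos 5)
The paper does not actually prove this proposition: it is quoted verbatim from \cite[Prop.~11.4.12]{Cox11}, so there is no in-paper argument to compare yours against. Your proof is correct and is essentially the standard one behind that reference: the identification $\Psi_\xi=\{v\in\xi \mid \varphi(v)\le 1\}$ coming from simpliciality, the toric ramification formula applied to smooth refinements, the observation that every primitive lattice point of $\xi$ arises as a ray of some smooth refinement (so the discrepancy condition is equivalent to an inequality on $\varphi$ at all primitive non-ray points), and the reduction from arbitrary lattice points of $\Psi_\xi$ to primitive ones --- which is indeed the only delicate step, and which you handle correctly in both the terminal and canonical cases.
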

  
 In particular, if $U_{\xi,L}$ is Goreinstein and $n$-dimensional, and its fan has   rays $u_{1},\dots,u_{n}$, with $rank(L)=n$,  then the face of $\Psi_{\xi}$ that does not contain the origin is 
\begin{eqnarray}\label{Ttau}
	T_{\xi}:=\operatorname{Conv}(u_{1},\dots,u_{n}),
\end{eqnarray}
which is a triangle that lies in the hyperplane defined by $m\in M$ such that $K_{U_{\xi}}=\operatorname{Div}(\chi^{m})$.
 Applying this proposition to our case, we get $\Psi_{\sigma}=\operatorname{Conv}(0,e_{1},\dots,e_{n})$ and the facet not containing the origin is   $\bigtriangleup=\operatorname{Conv}(e_{1},\dots, e_{n})$. Since $\mathbb{C}^{n}/G= U_{\sigma, N}$ is Gorenstein and has canonical singularities, it follows the lattice points of $\Psi_{\sigma}$ are given by the set 
\begin{eqnarray*}
\nu_{G}=\{\hat{g}\in \hat{G}|\operatorname{age}(g)=1\}\cup\{e_{1},\dots,e_{n}\}.
\end{eqnarray*}

 Now we see how to find the toric minimal models of $\mathbb{C}^{n}/G=U_{\sigma,N}$. For every junior class $g$ of $G$, $\varphi_{K_{U_{\sigma,N}}}(\hat{g})=1$, by Proposition \ref{p.c.T}  any simplicial refinement of $\sigma$ whose new rays  are those   generated by the element of  $\nu_{G}\cap\hat{G}$ provides a toric crepant morphism $\phi:X_{\Sigma}\rightarrow\mathbb{C}^{n}/G$, because of the toric ramification formula 
 \[
  K_{X_{\Sigma}} 
  \;=\; 
  \phi^*(K_{U_{\sigma,N}}) 
    \;+\;
    \sum_{\hat{g}\in \nu_{G}\cap\hat{G}} 
      \bigl(\varphi_{K_{U_{\sigma,N}}}(\hat{g}) - 1 \bigr)\,D_{Cone(\hat{g})}.
\]
In this situation, $X_{\Sigma}$ has terminal singularities.
\begin{remark}\label{R.S.T}
  When $n=2,3$ this  morphism is a crepant resolution of singularities because every Gorenstein toric surface and every Gorenstein simplicial $3$-dimensional toric variety with terminal singularities is smooth  \cite[Prop.~11.4.19]{Cox11}.
\end{remark}
The easiest way to obtain projective morphisms of  this type  (i.e., a minimal model) is to do a sequence of star subdivisions of $\sigma$ at each element of $\nu_{G}\cap\hat{G}$. Performing a star subdivision of the fan that consists of $\sigma$ and its faces provides us with an $n$-dimensional graph. However, when such a star subdivision is done at a point $\hat{g}\in\nu_{G}$, the graph becomes the 2-dimensional graph of $G$. It is reduced to a triangulation of $\bigtriangleup$ with a new vertex $\hat{g}$, which we   denote by $\bigtriangleup_{\hat{g}}$ and, by abuse of language,  call a star subdivision of $\bigtriangleup$ at $\hat{g}$. The fan associated to $\bigtriangleup_{\hat{g}}$ is denoted by $\Sigma^{*}(\hat{g})$. Another star subdivision of $\bigtriangleup_{\hat{g}}$ at a point $\hat{h}\in\nu\symbol{92}\{\hat{g}\}$ will be denoted by $\bigtriangleup_{(\hat{g},\hat{h})}$ (and its associated fan by $\Sigma^{*}(\hat{g},\hat{h})$) and so on. Note that it can happen $\bigtriangleup_{(\hat{g},\hat{h})}\neq \bigtriangleup_{(\hat{h},\hat{g})}$, as it is shown in the next example.
 \begin{example}\label{Z_{6}}
  Following the notation of Example \ref{Z_6,1}, we will find two projective crepant resolutions of $\mathbb{C}^{3}/\mathbb{Z}_{6,(1,2,3)}$ by performing different sequences of star subdivisions of $\bigtriangleup$. At first, we take
  the sequence of star subdivisions  $(\hat{g_{1}},\hat{g_{2}},\hat{g_{3}},\hat{g_{4}})$, and the sequence of   corresponding graphs is
\begin{center}
\begin{tikzpicture}[scale=0.40]
 	\draw (0,0) -- (8,0); 
 	\draw (0,0) -- (4,6.9); 
 	\draw (8,0) -- (4,6.9);   

 	\draw [fill] (0,0) circle (3pt); 
 	\draw [fill] (8,0) circle (3pt); 
 	\draw [fill] (4,6.9) circle (3pt); 

 	\draw [fill] (4.3,3.45) circle (3pt); 
 	\draw [fill] (5.33,0) circle (3pt); 
 	\draw [fill] (2,3.45) circle (3pt); 
 	\draw [fill] (2.67,0) circle (3pt); 

 	\node at (-0.5,0) {$\mathbf e_1$}; 
 	\node at (8.6,0) {$\mathbf e_2$}; 
 	\node at (4.2,7.4) {$\mathbf e_3$}; 
 	
 	\node at (4.9,3.45) {$\small\hat{g_{1}}$};
 	\node at (2.67,-0.6) {$\hat{g_{4}}$}; 
 	\node at (5.33,-0.6) {$\hat{g_{2}}$};
 	\node at (1.4,3.45) {$\hat{g_{3}}$};
\end{tikzpicture} \hskip 1cm 
 \raisebox{5em}{
 \begin{tikzpicture}
     $\xrightarrow{(\hat{g_{1}})}$
 \end{tikzpicture}} \hskip1cm
 \begin{tikzpicture}[scale=0.40]
 	\draw (0,0) -- (8,0); 
 	\draw (0,0) -- (4,6.9); 
 	\draw (8,0) -- (4,6.9);   
\draw (4.3,3.45) -- (0,0);
\draw (4.3,3.45) -- (4,6.9);
\draw (4.3,3.45) -- (8,0);
 	\draw [fill] (0,0) circle (3pt); 
 	\draw [fill] (8,0) circle (3pt); 
 	\draw [fill] (4,6.9) circle (3pt); 

 	\draw [fill] (4.3,3.45) circle (3pt); 
 	\draw [fill] (5.33,0) circle (3pt); 
 	\draw [fill] (2,3.45) circle (3pt); 
 	\draw [fill] (2.67,0) circle (3pt); 

 	\node at (-0.5,0) {$\mathbf e_1$}; 
 	\node at (8.6,0) {$\mathbf e_2$}; 
 	\node at (4.2,7.4) {$\mathbf e_3$}; 
 	
 	\node at (4.9,3.45) {$\small\hat{g_{1}}$};
 	\node at (2.67,-0.6) {$\hat{g_{4}}$}; 
 	\node at (5.33,-0.6) {$\hat{g_{2}}$};
 	\node at (1.4,3.45) {$\hat{g_{3}}$};
 \end{tikzpicture} \hskip1cm
 \raisebox{5em}{
 \begin{tikzpicture}
     $\xrightarrow{(\hat{g_{1}},\hat{g_{2}})}$
 \end{tikzpicture}} \hskip 1cm
 \begin{tikzpicture}[scale=0.40]
 	\draw (0,0) -- (8,0); 
 	\draw (0,0) -- (4,6.9); 
 	\draw (8,0) -- (4,6.9);   
\draw (4.3,3.45) -- (0,0);
\draw (4.3,3.45) -- (4,6.9);
\draw (4.3,3.45) -- (8,0);
\draw (4.3,3.45) -- (5.33,0);
 	\draw [fill] (0,0) circle (3pt); 
 	\draw [fill] (8,0) circle (3pt); 
 	\draw [fill] (4,6.9) circle (3pt); 

 	\draw [fill] (4.3,3.45) circle (3pt); 
 	\draw [fill] (5.33,0) circle (3pt); 
 	\draw [fill] (2,3.45) circle (3pt); 
 	\draw [fill] (2.67,0) circle (3pt); 

 	\node at (-0.5,0) {$\mathbf e_1$}; 
 	\node at (8.6,0) {$\mathbf e_2$}; 
 	\node at (4.2,7.4) {$\mathbf e_3$}; 
 	
 	\node at (4.9,3.45) {$\small\hat{g_{1}}$};
 	\node at (2.67,-0.6) {$\hat{g_{4}}$}; 
 	\node at (5.33,-0.6) {$\hat{g_{2}}$};
 	\node at (1.4,3.45) {$\hat{g_{3}}$};

 \end{tikzpicture}
 
\raisebox{5em}{
 \begin{tikzpicture}
     $\xrightarrow{(\hat{g_{1}},\hat{g_{2}},\hat{g_{3}})}$
 \end{tikzpicture}} \hskip 1cm
 \begin{tikzpicture}[scale=0.40]
 	\draw (0,0) -- (8,0); 
 	\draw (0,0) -- (4,6.9); 
 	\draw (8,0) -- (4,6.9);   
\draw (4.3,3.45) -- (0,0);
\draw (4.3,3.45) -- (4,6.9);
\draw (4.3,3.45) -- (8,0);
\draw (4.3,3.45) -- (5.33,0);
\draw (4.3,3.45)-- (2, 3.45);
 	\draw [fill] (0,0) circle (3pt); 
 	\draw [fill] (8,0) circle (3pt); 
 	\draw [fill] (4,6.9) circle (3pt); 

 	\draw [fill] (4.3,3.45) circle (3pt); 
 	\draw [fill] (5.33,0) circle (3pt); 
 	\draw [fill] (2,3.45) circle (3pt); 
 	\draw [fill] (2.67,0) circle (3pt); 

 	\node at (-0.5,0) {$\mathbf e_1$}; 
 	\node at (8.6,0) {$\mathbf e_2$}; 
 	\node at (4.2,7.4) {$\mathbf e_3$}; 
 	
 	\node at (4.9,3.45) {$\small\hat{g_{1}}$};
 	\node at (2.67,-0.6) {$\hat{g_{4}}$}; 
 	\node at (5.33,-0.6) {$\hat{g_{2}}$};
 	\node at (1.4,3.45) {$\hat{g_{3}}$};

 \end{tikzpicture}\hskip1cm 
 \raisebox{5em}{
 \begin{tikzpicture}
     $\xrightarrow{(\hat{g_{1}},\hat{g_{2}},\hat{g_{3}}, \hat{g_{4}})}$
 \end{tikzpicture}} \hskip1cm
 \begin{tikzpicture}[scale=0.40]
 	\draw (0,0) -- (8,0); 
 	\draw (0,0) -- (4,6.9); 
 	\draw (8,0) -- (4,6.9);   
\draw (4.3,3.45) -- (0,0);
\draw (4.3,3.45) -- (4,6.9);
\draw (4.3,3.45) -- (8,0);
\draw (4.3,3.45) -- (5.33,0);
\draw (4.3,3.45)-- (2, 3.45);
\draw (4.3,3.45)-- (2.67, 0);
 	\draw [fill] (0,0) circle (3pt); 
 	\draw [fill] (8,0) circle (3pt); 
 	\draw [fill] (4,6.9) circle (3pt); 

 	\draw [fill] (4.3,3.45) circle (3pt); 
 	\draw [fill] (5.33,0) circle (3pt); 
 	\draw [fill] (2,3.45) circle (3pt); 
 	\draw [fill] (2.67,0) circle (3pt); 

 	\node at (-0.5,0) {$\mathbf e_1$}; 
 	\node at (8.6,0) {$\mathbf e_2$}; 
 	\node at (4.2,7.4) {$\mathbf e_3$}; 
 	
 	\node at (4.9,3.45) {$\small\hat{g_{1}}$};
 	\node at (2.67,-0.6) {$\hat{g_{4}}$}; 
 	\node at (5.33,-0.6) {$\hat{g_{2}}$};
 	\node at (1.4,3.45) {$\hat{g_{3}}$};
\end{tikzpicture}
 \end{center}
The last graph $\bigtriangleup_{(\hat{g_{1}},\hat{g_{2}},\hat{g_{3}},\hat{g_{4}})}$   corresponds to the fan $\Sigma^{*}(\hat{g_{1}},\hat{g_{2}},\hat{g_{3}},\hat{g_{4}})$. This is a refinement of $\sigma$;  the corresponding toric morphism $\phi_{(\hat{g_{1}},\hat{g_{2}},\hat{g_{3}},\hat{g_{4}})}:X_{\Sigma^{*}(\hat{g_{1}},\hat{g_{2}},\hat{g_{3}},\hat{g_{4}})}\longrightarrow\mathbb{C}^{3}/\mathbb{Z}_{6,(1,2,3)}$ is a minimal model and, therefore  by Remark \ref{R.S.T}  a crepant resolution for the quotient singularity. On the other hand, after performing the sequence $(\hat{g_{4}},\hat{g_{3}},\hat{g_{1}},\hat{g_{2}})$ of star subdivisions, we get $\bigtriangleup_{(\hat{g_{4}},\hat{g_{3}},\hat{g_{1}},\hat{g_{2}})}$, whose graph is
\begin{center}
 \begin{tikzpicture}[scale=0.50]
 	\draw (0,0) -- (8,0); 
 	\draw (0,0) -- (4,6.9); 
 	\draw (8,0) -- (4,6.9);   
\draw (2.67,0)-- (4,6.9);
\draw (2.67,0)-- (2,3.45);
\draw (2.67,0)-- (4.3,3.45);
\draw (4.3,3.45) -- (8,0);
\draw (4.3,3.45) -- (4,6.9);
\draw (4.3,3.45) -- (5.33,0);

 	\draw [fill] (0,0) circle (3pt); 
 	\draw [fill] (8,0) circle (3pt); 
 	\draw [fill] (4,6.9) circle (3pt); 

 	\draw [fill] (4.3,3.45) circle (3pt); 
 	\draw [fill] (5.33,0) circle (3pt); 
 	\draw [fill] (2,3.45) circle (3pt); 
 	\draw [fill] (2.67,0) circle (3pt); 

 	\node at (-0.5,0) {$\mathbf e_1$}; 
 	\node at (8.6,0) {$\mathbf e_2$}; 
 	\node at (4.2,7.4) {$\mathbf e_3$}; 
 	
 	\node at (4.9,3.45) {$\small\hat{g_{1}}$};
 	\node at (2.67,-0.6) {$\hat{g_{4}}$}; 
 	\node at (5.33,-0.6) {$\hat{g_{2}}$};
 	\node at (1.4,3.45) {$\hat{g_{3}}$};
\end{tikzpicture}
 \end{center}
 and the corresponding fan is  $\Sigma^{*}(\hat{g_{4}},\hat{g_{3}},\hat{g_{1}},\hat{g_{1}})$. This fan provides another toric minimal model $X_{\Sigma^{*}(\hat{g_{4}},\hat{g_{3}},\hat{g_{1}},\hat{g_{2}})}\longrightarrow\mathbb{C}^{3}/\mathbb{Z}_{6,(1,2,3)}$ for the singularity. 
  \end{example}
  
Now  two remarks are in order. First, not all the toric minimal models of the singularity $\mathbb{C}^{n}/G$ have a fan which is a sequence of star subdivisions at the points of $\hat{G}\cap\nu_{G}$. For instance, the fan related to the graph 
\begin{center}
\begin{tikzpicture}[scale=0.50]
 	\draw (0,0) -- (8,0); 
 	\draw (0,0) -- (4,6.9); 
 	\draw (8,0) -- (4,6.9);   
\draw (2,3.45) -- (2.67,0);
\draw (4.3,3.45) -- (4,6.9);
\draw (4.3,3.45) -- (8,0);
\draw (4.3,3.45) -- (5.33,0);
\draw (4.3,3.45)-- (2, 3.45);
\draw (4.3,3.45)-- (2.67, 0);
 	\draw [fill] (0,0) circle (3pt); 
 	\draw [fill] (8,0) circle (3pt); 
 	\draw [fill] (4,6.9) circle (3pt); 

 	\draw [fill] (4.3,3.45) circle (3pt); 
 	\draw [fill] (5.33,0) circle (3pt); 
 	\draw [fill] (2,3.45) circle (3pt); 
 	\draw [fill] (2.67,0) circle (3pt); 

 	\node at (-0.5,0) {$\mathbf e_1$}; 
 	\node at (8.6,0) {$\mathbf e_2$}; 
 	\node at (4.2,7.4) {$\mathbf e_3$}; 
 	
 	\node at (4.9,3.45) {$\small\hat{g_{1}}$};
 	\node at (2.67,-0.6) {$\hat{g_{4}}$}; 
 	\node at (5.33,-0.6) {$\hat{g_{2}}$};
 	\node at (1.4,3.45) {$\hat{g_{3}}$};

 \end{tikzpicture}

\end{center}
provides another minimal model for $\mathbb{C}^{3}/\mathbb{Z}_{6,(1,2,3)}$. However, both fan and graph cannot be obtained by a sequence of star subdivisions of the set $\{\hat{g_{1}},\hat{g_{2}},\hat{g_{3}},\hat{g_{4}}\}$. Second, suppose $\Xi$ is the fan of a crepant morphism of $\mathbb{C}^{n}/G$. In that case, the number of elements of $\Xi(m) $  equals the number of $m$-dimensional faces corresponding to the triangulation of $\bigtriangleup$, for $m> 0$.

\begin{remark}
If $\mathbb{C}^{n}/G=U_{\sigma,N}$ admits a toric crepant resolution $\phi:X_{\Sigma}\rightarrow \mathbb{C}^{n}/G$ coming from a simplicial refinement of $\sigma$ such that new rays added are those generated by each element of the set $\nu_{G}\cap\hat{G}$, then all the irreducible components of $\operatorname{Exc}(\phi)$ have codimension $1$ in $X_{\Sigma}$. Moreover, one can check that $$\operatorname{Exc}(\phi)=\bigcup_{\hat{g}\in\nu_{G}\cap\hat{G}} E_{g},$$ where $E_{g}=V(Cone(\hat{g}))\subset X_{\Sigma}$, that is, there is a bijection between the irreducible components of $\operatorname{Ext}(\phi)$, which are called   \emph{exceptional divisors} of $\phi$, and the junior elements of $G$ \cite{ItoReid94}. In particular, each irreducible component of $\operatorname{Ext}(\phi)$ is a smooth toric variety. Moreover, the complete
exceptional divisors of $\phi$ are those that are located in the variety $\phi^{-1}(V_{N}(\sigma))$, where $V(\sigma)=\overline{O_{N}(\sigma)}=O_{N}(\sigma)$. Therefore, $E_{g}$ is a complete exceptional divisor of $\phi$ if and only if $\hat{g}$ lies in the interior of $\sigma$.  
\end{remark}
\begin{example}
 The projective crepant resolution $$\phi_{(\hat{g_{1}},\hat{g_{2}},\hat{g_{3}},\hat{g_{4}})}:X_{\Sigma^{*}(\hat{g_{1}},\hat{g_{2}},\hat{g_{3}},\hat{g_{4}})}\longrightarrow\mathbb{C}^{3}/\mathbb{Z}_{6,(1,2,3)}$$ described in Example \ref{Z_{6}}  has a unique complete exceptional divisor $E_{g_{1}}$, since $\hat{g}_{1}$ is the unique element that lies in the interior of $\sigma$. Let us  compute $E_{g_{1}}$. Since $\{\hat{g_{1}},e_{2},e_{3}\}$ is a basis of $N$, we can identify $N$ with $\mathbb{Z}^{3}$ by changing the coordinates in the following way   
 \[
\hat{g_{1}} \;\mapsto\; (1,0,0),
\quad
e_{2} \;\mapsto\; (0,1,0),
\quad
e_{3} \;\mapsto\; (0,0,1).
\]
In these new coordinates, we have
\[
e_{1} = (6,-2,-3),
\quad
\hat{g_{2}} = (2,0,-1),
\quad
\hat{g_{3}} = (3,-1,-1),
\quad
\hat{g_{4}} = (4,-1,-2).
\]

Let $\rho_{g_{1}}$ be the cone generated by $\hat{g_{1}}$. In this case, we have 
\[
\begin{aligned}
N(\rho_{g_{1}}) 
&:=\mathbb{Z}^{3}/\mathbb{Z}\hat{g}_{1}\cong \mathbb{Z}^{2},
\\[6pt]
\operatorname{Star}(\rho_{g_{1}})
&= 
\bigl\{
\,\overline{\xi} \;\subseteq\; N(\rho_{g_{1}})_\mathbb{R}
\;\mid\;
\rho_{g_{1}} \preceq \xi \in \Sigma^{*}\bigl(\hat{g_{1}},\hat{g_{2}},\hat{g_{3}},\hat{g_{4}}\bigr)
\bigr\},
\\[6pt]
E_{g} 
&\;\cong\; 
X_{\operatorname{Star}(\rho_{g_{1}}),N(\rho_{g_{1}})}.
\end{aligned}
\] 
Since all the maximal cones of $\Sigma^{*}\bigl(\hat{g_{1}},\hat{g_{2}},\hat{g_{3}},\hat{g_{4}}\bigr)$ contain the ray $\rho_{g_{1}}$ and 
$$\overline{e_{1}}=(-2,-3), \quad \overline{\hat{g_{2}}}=(0,-1), \quad \overline{\hat{g_{3}}}=(-1,-1), \quad \overline{\hat{g_{4}}}=(-1,-2), \quad \overline{e_{2}}=(1,0), \quad \overline{e_{3}}=(0,1),$$
 it follows that the fan $\operatorname{Star}(\rho_{g_{1}})$ also has $6$ maximal cones, which are   
\[
\begin{alignedat}{2}
\xi_{1} &:= \operatorname{Cone}\bigl(\overline{e_{2}},\,\overline{e_{3}}\bigr), 
&\qquad
\xi_{2} &:= \operatorname{Cone}\bigl(\overline{e_{3}},\,\overline{\hat{g}_{3}}\bigr),\\[6pt]
\xi_{3} &:= \operatorname{Cone}\bigl(\overline{e_{1}},\,\overline{\hat{g}_{3}}\bigr), 
&\qquad
\xi_{4} &:= \operatorname{Cone}\bigl(\overline{e_{1}},\,\overline{\hat{g}_{4}}\bigr),\\[6pt]
\xi_{5} &:= \operatorname{Cone}\bigl(\overline{\hat{g}_{2}},\,\overline{\hat{g}_{4}}\bigr),
&\qquad
\xi_{6} &:= \operatorname{Cone}\bigl(\overline{e_{2}},\,\overline{\hat{g}_{2}}\bigr).
\end{alignedat}
\]
Thus, the representation of $\operatorname{Star}(\rho_{g_{1}})$ in $\mathbb{R}^{2}$ is given by the following picture

\begin{center}
\begin{tikzpicture}[scale=1.2, >=stealth, line cap=round, line join=round]
    \coordinate (O) at (0,0);

    \coordinate (e1) at (-2,-3);
    \coordinate (g2) at (0,-1);
    \coordinate (g3) at (-1,-1);
    \coordinate (g4) at (-1,-2);
    \coordinate (e2) at (1,0);
    \coordinate (e3) at (0,1);

    \fill[blue!20]   (O) -- (e2) -- (e3) -- cycle;     
    \fill[red!20]    (O) -- (e3) -- (g3) -- cycle;      
    \fill[green!20]  (O) -- (e1) -- (g3) -- cycle;      
    \fill[yellow!30] (O) -- (e1) -- (g4) -- cycle;      
    \fill[purple!20] (O) -- (g2) -- (g4) -- cycle;      
    \fill[cyan!20]   (O) -- (e2) -- (g2) -- cycle;      

    \draw[->,thick] (O) -- (e1)
        node[pos=1, anchor=north east, xshift=-1pt] {$\overline{e_{1}}$};
    \draw[->,thick] (O) -- (g2)
        node[pos=1, anchor=north west, xshift=-1pt] {$\overline{\hat{g}_{2}}$};
    \draw[->,thick] (O) -- (g3)
        node[pos=1, anchor=east, xshift=-1pt] {$\overline{\hat{g}_{3}}$};
    \draw[->,thick] (O) -- (g4)
        node[pos=1, anchor=west, xshift=-1pt] {$\overline{\hat{g}_{4}}$};
    \draw[->,thick] (O) -- (e2)
        node[pos=1, anchor=north, xshift=2pt] {$\overline{e_{2}}$};
    \draw[->,thick] (O) -- (e3)
        node[pos=1, anchor=west, yshift=2pt] {$\overline{e_{3}}$};

    \draw[->] (-3.2,0)--(2.2,0) node[right]{$x$};
    \draw[->] (0,-3.2)--(0,2.2) node[above]{$y$};

\end{tikzpicture}
\end{center}
This is a sequence of $3$ star subdivisions in the fan of $\mathbb{P}^{2}$. It can also be seen as a sequence of $3$ blowups starting in $\mathbb{P}^{2}$, with centers at $3$ points. Later, as consequence of our main result, we will see that $X_{\Sigma^{*}(\hat{g_{1}},\hat{g_{2}},\hat{g_{3}},\hat{g_{4}})}$ is total space of a  line bundle over $E_{g}$, and that explains why their fans have the same number of maximal cones. 
\end{example} All resolutions of $\mathbb{C}^{3}/\mathbb{Z}_{6,(1,2,3)}$
in  Example \ref{Z_{6}}  have a   graph with six triangles,  and this number is  the order of $\mathbb{Z}_{6,(1,2,3)}$. The next results show that this is no coincidence.
\begin{proposition} \cite[Thm.~12.3.9]{Cox11}
\label{p 2.8}
    Let $X_{\Xi}$ be an $n$-dimensional toric variety and let $$e(X_{\Xi}):= \sum_{i=0}^{2n}(-1)^{i}\rk H_{c}^{i}(X_{\Xi},\mathbb{Z})$$ be its topological Euler number. Then $e(X_{\Xi})=|\Xi(n)|$. 
\end{proposition}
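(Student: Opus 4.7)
The plan is to exploit the torus orbit decomposition of $X_\Xi$ together with the additivity of the compactly supported Euler number across a stratification by locally closed subvarieties. By the orbit--cone correspondence, $X_\Xi = \bigsqcup_{\xi \in \Xi} O_N(\xi)$, where each orbit satisfies $O_N(\xi) \cong (\mathbb{C}^*)^{n - \dim \xi}$. Since any cone $\xi \in \Xi$ can be recovered as an open stratum inside the closure of its smaller faces, one can order the cones by increasing dimension and peel off orbits one at a time.

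The first step I would carry out is to prove the additivity formula $e(X) = e(U) + e(Z)$ whenever $Z \subset X$ is closed with open complement $U$. This is standard: the long exact sequence
\begin{equation*}
\cdots \longrightarrow H_c^i(U,\mathbb{Z}) \longrightarrow H_c^i(X,\mathbb{Z}) \longrightarrow H_c^i(Z,\mathbb{Z}) \longrightarrow H_c^{i+1}(U,\mathbb{Z}) \longrightarrow \cdots
\end{equation*}
yields the alternating sum identity on ranks. Applying this inductively (starting from the largest-dimensional orbits, whose union is open in $X_\Xi$, and working downward) gives
\begin{equation*}
e(X_\Xi) \;=\; \sum_{\xi \in \Xi} e\bigl(O_N(\xi)\bigr) \;=\; \sum_{\xi \in \Xi} e\bigl((\mathbb{C}^*)^{n-\dim \xi}\bigr).
\end{equation*}

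The second step is to compute $e((\mathbb{C}^*)^k)$. Since $\mathbb{C}^*$ is homotopy equivalent to $S^1$, a direct computation (or Künneth together with $e(\mathbb{C}^*)=0$) gives $e((\mathbb{C}^*)^k)=0$ for $k \geq 1$, while $e((\mathbb{C}^*)^0) = e(\{\mathrm{pt}\}) = 1$. Substituting into the sum kills every orbit except those coming from cones of maximal dimension, yielding $e(X_\Xi) = |\Xi(n)|$.

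The main (small) obstacle is the additivity step: it requires knowing that compactly supported cohomology admits the six-term excision-type sequence for a closed/open decomposition of a locally compact Hausdorff space, and that the ranks appearing are all finite so that the alternating sum is well-defined. For $X_\Xi$ this finiteness follows because each stratum $(\mathbb{C}^*)^{n-\dim\xi}$ has finitely generated compactly supported cohomology and $\Xi$ is a finite fan (which is implicit since $X_\Xi$ is assumed to be a well-defined variety of interest here). Once those foundational points are in place, the proof is a two-line computation.
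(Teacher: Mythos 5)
Your argument is correct and is essentially the standard proof of this fact, which is exactly the one given in the cited reference \cite[Thm.~12.3.9]{Cox11}: stratify $X_\Xi$ by torus orbits via the orbit--cone correspondence, use additivity of the compactly supported Euler characteristic over the (finite) stratification, and observe that $e((\mathbb{C}^*)^k)=0$ unless $k=0$, so only the fixed points coming from maximal cones contribute. The paper itself offers no proof beyond the citation, so there is nothing further to compare.
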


\begin{proposition}\cite[Thm.~1.10] {Batyrev99}
Let $G$ be a finite subgroup of $\operatorname{SL}(n,\mathbb{C}).$ If a crepant resolution $X\rightarrow \mathbb{C}^{n}/G$ exists, then the Euler number of $X$
equals the number of conjugacy classes of $G$.
 \end{proposition}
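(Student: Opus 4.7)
The plan is to give a proof in the abelian toric setting that is the focus of this paper, and then indicate why the general case requires much heavier machinery. When $G$ is abelian, every conjugacy class of $G$ is a singleton, so the number of conjugacy classes is simply $|G|$. Combined with Proposition~\ref{p 2.8}, which gives $e(X_\Xi) = |\Xi(n)|$ for a toric crepant resolution $\phi : X_\Xi \to \mathbb{C}^n/G$, the problem reduces to showing that $|\Xi(n)| = |G|$.

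For this, I would introduce the multiplicity of a simplicial $n$-dimensional cone $\eta$ with primitive ray generators $v_1, \dots, v_n \in N$, defined as $\operatorname{mult}(\eta) := [N : \mathbb{Z} v_1 + \cdots + \mathbb{Z} v_n]$. Three ingredients then suffice. First, by the description of the lattice $N$ in Section~2.1 one has $[N : \mathbb{Z}^n] = |G|$, so $\operatorname{mult}(\sigma) = |G|$ for $\sigma = \operatorname{Cone}(e_1, \dots, e_n)$. Second, since $\phi$ is a resolution of singularities, $X_\Xi$ is smooth, which is equivalent to $\operatorname{mult}(\eta) = 1$ for every $\eta \in \Xi(n)$. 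Third, multiplicity is additive under simplicial refinement: $\operatorname{mult}(\sigma) = \sum_{\eta \in \Xi(n)} \operatorname{mult}(\eta)$. Combining these three facts yields $|G| = |\Xi(n)| = e(X_\Xi)$, which is the desired equality.

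The core technical point is the additivity in the third ingredient, which I expect to be the main obstacle. It can be obtained from the identification $\operatorname{mult}(\eta) = n!\,\operatorname{Vol}\bigl(\operatorname{Conv}(0, v_1, \dots, v_n)\bigr)$, where the Euclidean volume is computed in coordinates adapted to any $\mathbb{Z}$-basis of $N$; the claim then reduces to the standard additivity of volume under the triangulation of $\sigma$ induced by $\Xi$, applied inductively to the sequence of star subdivisions built from $\nu_G \cap \widehat{G}$ as described in Section~2.2. For the general, non-abelian version stated in \cite{Batyrev99}, this toric strategy breaks down, and the result must instead be extracted from Batyrev's stringy/$p$-adic integration computation of $e_{\mathrm{st}}(\mathbb{C}^n/G)$; in the present context I would treat the non-abelian case as a black box.
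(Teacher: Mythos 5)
The paper offers no proof of this proposition at all: it is quoted directly from \cite[Thm.~1.10]{Batyrev99}, and the only computation the paper performs is the corollary you are reproving, namely that $|\Xi(n)|=\#G$ in the abelian toric case. So your proposal supplies an argument where the paper has only a citation. Your reduction is the right one: conjugacy classes of an abelian $G$ are singletons, Proposition \ref{p 2.8} gives $e(X_\Xi)=|\Xi(n)|$, and $[N:\mathbb{Z}^n]=|G|$ by the description of $N$ in Section 2.1, so everything hinges on $\operatorname{mult}(\sigma)=\sum_{\eta\in\Xi(n)}\operatorname{mult}(\eta)$ together with smoothness ($\operatorname{mult}(\eta)=1$). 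Note also that applying Proposition \ref{p 2.8} to an arbitrary crepant resolution tacitly uses the fact, recorded in Section 2.2 from \cite{Yam23}, that every crepant resolution of an abelian quotient is automatically toric; this should be said.

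The one genuine gap is your third ingredient. Multiplicity is \emph{not} additive under arbitrary simplicial refinements: subdividing $\operatorname{Cone}(e_1,e_2)\subset\mathbb{R}^2$ at $e_1+e_2$ produces two unimodular cones whose multiplicities sum to $2$, while $\operatorname{mult}(\operatorname{Cone}(e_1,e_2))=1$. The simplices $\operatorname{Conv}(0,v_1^\eta,\dots,v_n^\eta)$ tile $\Psi_\sigma=\operatorname{Conv}(0,e_1,\dots,e_n)$ only if every primitive ray generator of $\Xi$ lies on the affine hyperplane $\langle(1,\dots,1),\,\cdot\,\rangle=1$, i.e.\ has age $1$ --- and by the toric ramification formula this is exactly the crepancy hypothesis. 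Once you state this, the cones of $\Xi$ cut out a lattice triangulation of $\bigtriangleup$, the simplices over its triangles tile $\Psi_\sigma$, and the volume identity $\operatorname{mult}(\eta)=n!\operatorname{Vol}_N(\operatorname{Conv}(0,v_1^\eta,\dots,v_n^\eta))$ gives additivity directly, with no induction over star subdivisions. Dropping that induction is not just cosmetic: as the paper points out after Example \ref{Z_{6}}, not every toric crepant resolution arises from a sequence of star subdivisions at the points of $\nu_G\cap\hat G$, so an argument organized around such sequences would miss some resolutions. With the crepancy hypothesis made explicit the abelian case is complete, and deferring the non-abelian case to Batyrev's stringy computation is exactly what the paper itself does.
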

It follows from the previous Propositions that when $G$ is an abelian finite subgroup of $\operatorname{SL}(n,\mathbb{C})$, and if there exists a toric crepant resolution $X_{\Sigma}\rightarrow \mathbb{C}^{n}/G$, then $|\Sigma(n)|=\#G$.

\subsection{Hilbert basis resolutions}

There are many examples of quotients $\mathbb{C}^{n}/G$ that do not admit a crepant resolution. The next result provides a big extent of such examples. 
\begin{proposition} \cite[Thm.~1.1]{Yamagishi18}
If $\mathbb{C}^{n}/G$ admits a (not necessarily projective) crepant
resolution, then $G$ is generated by junior elements.
\end{proposition}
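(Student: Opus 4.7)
The plan is to leverage the toric description of the situation so that the claim reduces to a one-line observation about bases of the lattice $N$. By the cited result from \cite{Yam23}, any crepant resolution of $\mathbb{C}^{n}/G$ is automatically a minimal model, and every minimal model of $\mathbb{C}^{n}/G$ is toric. Hence we may assume our crepant resolution is $\phi\colon X_{\Xi,N}\to U_{\sigma,N}$ for some simplicial refinement $\Xi$ of $\Sigma$, and since $X_\Xi$ is smooth, every maximal cone of $\Xi$ is a smooth cone with respect to $N$.

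The first step is to pin down where the ray generators of $\Xi$ live. Since $U_{\sigma,N}$ is Gorenstein, the support function $\varphi$ of $K_{U_{\sigma,N}}$ is integral and equals $\langle m,\cdot\rangle$ for $m=(1,\dots,1)\in M$; in other words, $\varphi^{-1}(1)\cap\sigma$ is exactly the simplex $\bigtriangleup$. The toric ramification formula proved in the lemma above gives
\begin{equation*}
K_{X_\Xi}=\phi^{*}K_{U_{\sigma,N}}+\sum_{\rho\in\Xi(1)\setminus\Sigma(1)}\bigl(\varphi(u_{\rho})-1\bigr)D_{\rho},
\end{equation*}
so crepancy forces $\varphi(u_{\rho})=1$ for every new ray, while the old rays $\rho_{e_{i}}$ already satisfy $\varphi(e_{i})=1$. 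Thus every primitive ray generator of $\Xi$ is a lattice point of $N$ lying on $\bigtriangleup$. A direct check (using that $\bigtriangleup\cap\sigma$ has barycentric coordinates in $[0,1]$) identifies $\bigtriangleup\cap N$ with $\nu_{G}=\{\hat{g}\mid g\in G,\ \mathrm{age}(g)=1\}\cup\{e_{1},\dots,e_{n}\}$.

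The second step is the lattice-basis argument. Pick any maximal cone $\xi\in\Xi(n)$; such a cone exists because $\Xi$ refines $\sigma$ and $\dim\sigma=n$. Smoothness of $X_{\Xi}$ means its ray generators $u_{1},\dots,u_{n}$ form a $\mathbb{Z}$-basis of $N$. By the previous paragraph, each $u_{i}$ belongs to $\nu_{G}$: either $u_{i}=e_{j}$, which projects to $0$ in $N/\mathbb{Z}^{n}\cong G$, or $u_{i}=\hat{g_{i}}$ for a junior element $g_{i}\in G$, which projects to $g_{i}$ itself. Since $u_{1},\dots,u_{n}$ generate $N$, their classes generate $N/\mathbb{Z}^{n}\cong G$, and these classes are either trivial or junior elements. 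Therefore $G$ is generated by junior elements, as claimed.

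I do not anticipate a serious obstacle: the only substantive input is the toricity of minimal models (already cited), together with the ramification formula, the description of $\bigtriangleup\cap N=\nu_{G}$, and the standard identification of smoothness of a simplicial $n$-cone with the basis property of its ray generators. All of these are either established in Section~2 or are routine toric facts, so the proof should be short and essentially self-contained once these ingredients are in place.
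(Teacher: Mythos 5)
Your argument is correct, but it is worth noting that the paper does not prove this proposition at all: it is quoted verbatim from \cite[Thm.~1.1]{Yamagishi18}, where it is established for \emph{arbitrary} finite subgroups of $\operatorname{SL}(n,\mathbb{C})$ by valuation-theoretic methods (junior elements being detected via discrepancies of monomial valuations). What you supply is a short, self-contained toric proof that works only in the abelian case, which is the only case the paper ever uses. The two nontrivial inputs you rely on are both legitimately available in the paper's Section~2: the toricity of minimal models from \cite{Yam23} (plus the observation that a smooth crepant resolution is automatically a minimal model), and the identification $N\cap\bigtriangleup=\nu_{G}$. Given those, your key step is sound: crepancy forces every ray generator of $\Xi$ onto $\bigtriangleup$, hence into $\nu_{G}$ (and such points are automatically primitive, since they pair to $1$ with $m=(1,\dots,1)\in M$); smoothness of any single maximal cone $\xi\in\Xi(n)$ makes its ray generators a $\mathbb{Z}$-basis of $N$, whose images in $N/\mathbb{Z}^{n}\cong G$ are trivial or junior and generate $G$. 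The trade-off is clear: the citation buys generality (non-abelian $G$), while your argument buys transparency and self-containedness at the cost of restricting to the abelian setting and of importing the toricity of minimal models as a black box. One cosmetic slip: you call $\Xi$ a refinement of $\Sigma$ where you mean the fan generated by $\sigma$.
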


Thus, for example, the quotient variety $\mathbb{C}^{4}/\mathbb{Z}_{2,(1,1,1,1)}$ does not admit crepant resolutions because $\mathbb{Z}_{2,(1,1,1,1)}$ contains only senior elements. The reciprocal of the above proposition is not true, as we shall see later on. Sometimes, for instance when $G$ is generated by junior elements,  it is possible to find another type of resolution, which we describe below, that has very important properties and provides another criterion for $\mathbb{C}^{n}/G$ to have a crepant resolution. 

In \cite[Thm.~16.4]{Schrijver86} it has been proved that, given a rational strongly convex polyhedral cone of maximal dimension $\xi$ in $L_{\mathbb{R}}$, then  $\xi\cap L$ has a unique minimal system of generators $\bold{Hlb}_{L}(\xi)$, which is given by
 $$\bold{Hlb}_{L}(\xi)=\{n\in \xi\cap L| \text{n is irreducible}\}$$
 where $n\in \xi\cap L$ is an irreducible element if $n=n_{1}+n_{2}$, with $n_{i}\in\xi\cap L$, then $n_{i}=n$ and $n_{j}=0$ for $i\neq j$.
 In particular, in  our case $\mathbb{C}^{n}/G=U_{\sigma,N}$, we have that $\nu_{G}\subset \mathbf{Hlb}_{N}(\sigma)$. Furthermore, since $N= \mathbb{Z}^{n}+\sum_{g\in G}\hat{g}\mathbb{Z}$, one has
 \begin{eqnarray}
  \mathbf{Hlb}_{N}(\sigma)\subset \hat{G}\cup\{e_{1},\dots,c_{n}\}.   
 \end{eqnarray}
 In fact, by Gordan Lemma  \cite[Prop.~1.2.17]{Cox11},
 $\sigma\cap N$ is generated by  union of the sets $\nu_{G}\setminus\hat{G}=\{e_{1},\dots,e_{n}\}$ and $K\cap N$, where 
 \[
K \;=\; 
\bigl\{
\,\sum_{m \in \nu_{G}\setminus\hat{G}} \delta_m \,m 
\;\bigm|\; 
0 \,\leq\, \delta_m \,<\, 1 \text{ for all } m\in \nu_{G}\setminus\hat{G}
\bigr\}.
\]
Note that $K=\operatorname{Conv}(0,e_{1},\dots,e_{n})$. Hence, by Proposition \ref{p.c.T}, $$K\cap N=\{\hat{g}\in \hat{G}|\operatorname{age}(g)=1\}.$$ Therefore, $\nu_{G}\setminus\hat{G}\cup (K\cap N)=\nu_{G}$ and the desired inclusion follows from the fact that $\mathbf{Hlb}_{N}(\sigma)$ is the minimal system of generators of $\sigma\cap N$.

\begin{definition}

A refinement $\Sigma$ of $\sigma$ defines a \emph{Hilbert basis resolution} $X_{\Sigma}\rightarrow U_{\sigma,N}$ if $\Sigma$ satisfies the following conditions:
\begin{itemize}
    \item $\Sigma$ is smooth.
    \item $\bold{Hlb}_{N}(\sigma)$ is the set of ray generators of $\Sigma$.
\end{itemize} 
\end{definition}

\begin{example}
    Consider the group $G:=\mathbb{Z}_{7, ((1, 1, 2, 3)}\subset \operatorname{SL}(4,\mathbb{C)}$.  In this case,
$\mathbb{C}^4 / G \;=\; U_{\sigma,\,N}$, where 
\[
   N \;=\; \mathbb{Z}^{3} \;+\; \mathbb{Z}\,\frac{1}{7}(1,1,2,3)
   \quad\text{and}\quad
   \sigma \;=\;\operatorname{Cone}\bigl(e_{1},\,e_{2},\,e_{3},\,e_{4}\bigr).
\]
Following the established notation, one has 
\begin{align}
\hat{G}= & \{(0,0,0,0), \quad \frac{1}{7}(1,1,2,3), \quad \frac{1}{7}(2, 2, 4, 6), \quad \frac{1}{7}(3, 3, 6, 2),  \\[3pt]  &  \frac{1}{7}(4, 4, 1, 5), \quad\frac{1}{7}(5, 5, 3, 1) , \quad\frac{1}{7}(6, 6, 5, 4)\}. \end{align}

Defining 
 \begin{equation}
\hat{g}_{1}  = \tfrac{1}{7}(1,1,2,3),\quad
\hat{g}_{2}  = \tfrac{1}{7}(3,3,6,2),\quad
\hat{g}_{3}  = \tfrac{1}{7}(4,4,1,5),\quad
\hat{g}_{4}  = \tfrac{1}{7}(5,5,3,1),
\end{equation}
 one can check that $$\bold{Hlb}_{N}(\sigma)=\{e_{1}, e_{1}, e_{3}, e_{4}, \hat{g_{1}}, \hat{g_{2}}, \hat{g_{3}}, \hat{g_{4}}\}.$$ The unique junior element $G$ corresponds to $\hat{g_{1}}$. Performing a star subdivision of $\sigma$ at $\hat{g_{1}}$, one gets a fan $\Sigma_{1}:=\Sigma^{*}(\hat{g_{1}})$, such that $$\Sigma_{1}(4)=\{\operatorname{Cone}(\hat{g_{1}},e_{2},e_{3},e_{4}), \operatorname{Cone}(\hat{g_{1}},e_{1}, e_{3},e_{4}), \operatorname{Cone}(\hat{g_{1}},e_{1},e_{2},e_{3}), \operatorname{Cone}(\hat{g_{1}}, e_{1},e_{2},e_{3})\}.$$ 
    This refinement provides a minimal model $X_{\Sigma_1}\rightarrow \mathbb{C}^{n}/G$, but this is not a resolution since $\operatorname{Cone}(\hat{g_{1}}, e_{1},e_{2},e_{3})$ is not a smooth cone. Furthermore, it is not possible to get a crepant resolution of $\mathbb{C}^{4}/G$ by a sequence of star subdivisions because $\nu_{G}=\{e_{1},e_{2},e_{3},e_{4},\hat{g_{1}}\}$. Nevertheless, it is possible to get a Hilbert basis resolution by performing the star subdivision of $\sigma$ at the sequence $(\hat{g_{1}},\hat{g_{2}},\hat{g_{3}},\hat{g_{4}})$. In this case, the resulting fan $\Sigma$ is such that
\[
\left\{
\begin{array}{l}
\operatorname{Cone}(\hat{g_{1}},e_{2},e_{3},e_{4}), \quad
\operatorname{Cone}(\hat{g_{1}},e_{1}, e_{3},e_{4}), \quad
\operatorname{Cone}(\hat{g_{1}},e_{1}, e_{2},\hat{g_{4}}), \quad
\operatorname{Cone}(\hat{g_{1}},e_{1}, e_{4},\hat{g_{4}}),
\\[6pt]
\operatorname{Cone}(\hat{g_{1}},e_{2}, e_{4},\hat{g_{4}}), \quad
\operatorname{Cone}(e_{1}, e_{2}, e_{4},\hat{g_{4}}), \quad
\operatorname{Cone}(\hat{g_{1}},e_{1}, e_{3},\hat{g_{3}}), \quad
\operatorname{Cone}(\hat{g_{1}},e_{2}, e_{3},\hat{g_{3}}),
\\[6pt]
\operatorname{Cone}(\hat{g_{1}},\hat{g_{3}}, e_{2},\hat{g_{5}}), \quad
\operatorname{Cone}(\hat{g_{1}},\hat{g_{3}}, e_{1},\hat{g_{5}}), \quad
\operatorname{Cone}(\hat{g_{1}},e_{1}, e_{2},\hat{g_{5}}),
\\[6pt]
\operatorname{Cone}(\hat{g_{5}},e_{1}, e_{2},e_{3}), \quad
\operatorname{Cone}(\hat{g_{3}},e_{2}, e_{3},\hat{g_{5}}), \quad
\operatorname{Cone}(\hat{g_{3}},e_{2}, e_{3},\hat{g_{5}})
\end{array}
\right.
\]
is the set of maximal cones of $\Sigma$, so that  $|\Sigma(4)|=14$.
\end{example}

The next result provides a necessary condition for the existence of crepant resolutions for an arbitrary Gorenstein quotient singularity $\mathbb{C}^{n}/G$ via the  Hilbert basis technique.
\begin{proposition}   \cite[Thm.~6.1]{Dais06} 
     Let $\mathbb{C}^{n}/G=U_{\sigma,N}$ be an abelian Gorenstein quotient singularity. If there exists a toric crepant resolution $X_{\Sigma}\rightarrow\mathbb{C}^{n}/G$ such that the set of ray generators of $\Sigma$ is $\nu_{G}$, then $\bold{Hlb}_{N}(\sigma)=\nu_{G}$.  
\end{proposition}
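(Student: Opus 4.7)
The plan is straightforward: the inclusion $\nu_{G}\subset\mathbf{Hlb}_{N}(\sigma)$ has been established in the preceding subsection, so the only thing to prove is the reverse inclusion $\mathbf{Hlb}_{N}(\sigma)\subset\nu_{G}$. The crucial tool will be the smoothness of $\Sigma$.

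Given $v\in\mathbf{Hlb}_{N}(\sigma)$, I would first locate a maximal cone $\tau\in\Sigma(n)$ containing $v$. By smoothness of $\Sigma$, the ray generators $u_{1},\dots,u_{n}$ of $\tau$ form a $\mathbb{Z}$-basis of $N$, so one can write
\[
v \;=\; \sum_{i=1}^{n} a_{i}\,u_{i}, \qquad a_{i}\in\mathbb{Z}_{\geq 0},
\]
with every $u_{i}\in\nu_{G}$ by hypothesis. It then suffices to show that $\sum_{i}a_{i}=1$, so that $v$ coincides with one of the $u_{i}$'s. Irreducibility delivers this: if $\sum_{i}a_{i}\geq 2$, picking any $j$ with $a_{j}\geq 1$ gives the decomposition $v=u_{j}+(v-u_{j})$, where both summands are nonzero elements of $\tau\cap N\subset\sigma\cap N$ (the nonzeroness of $v-u_{j}$ follows from the $\mathbb{Z}$-linear independence of the $u_{i}$'s together with $\sum_{i}a_{i}-1\geq 1$), contradicting the irreducibility of $v$. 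Hence $v=u_{j}\in\nu_{G}$ for some $j$, as required.

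I do not foresee any serious obstacle. The proof reduces to the standard toric fact that the ray generators of a smooth cone form a $\mathbb{Z}$-basis of the ambient lattice, combined with a direct use of the definition of irreducibility. Note that crepancy itself is not used directly in the argument; what is really used is smoothness of $\Sigma$ together with the hypothesis that $\Sigma(1)=\nu_{G}$.
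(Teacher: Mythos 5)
Your argument is correct. Note, however, that the paper does not actually prove this proposition: it is quoted verbatim from Dais--Henk--Ziegler (\cite[Thm.~6.1]{Dais06}) with no proof supplied, so there is nothing internal to compare against. Your proof is the natural self-contained one: the inclusion $\nu_{G}\subset\mathbf{Hlb}_{N}(\sigma)$ is indeed stated earlier in the paper, and for the converse your reduction works --- since $|\Sigma|=\sigma$ is pure of dimension $n$, every $v\in\mathbf{Hlb}_{N}(\sigma)$ lies in some $\tau\in\Sigma(n)$, smoothness makes the ray generators $u_{1},\dots,u_{n}$ of $\tau$ a $\mathbb{Z}$-basis of $N$ so that the coefficients $a_{i}$ are nonnegative integers, and irreducibility of $v$ (together with $v\neq 0$) forces $\sum_{i}a_{i}=1$, i.e.\ $v=u_{j}\in\nu_{G}$. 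Your closing observation is also accurate and worth keeping: crepancy plays no direct role, only smoothness of $\Sigma$ and the hypothesis that its ray generators are exactly $\nu_{G}$; the role of crepancy in the cited theorem is merely to guarantee, via Proposition \ref{p.c.T} and the ramification formula, that such a $\Sigma$ is the relevant object when a crepant resolution exists.
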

This  proposition says that whenever $\bold{Hlb}_{N}(\sigma)$ contains senior elements, the quotient $\mathbb{C}^{n}/G=U_{\sigma,N}$ does not admit a crepant resolution. In particular, $\mathbb{C}^{4}/\mathbb{Z}_{7, ((1, 1, 2, 3)}$ in fact does not have a crepant resolution, although $\mathbb{Z}_{7, ((1, 1, 2, 3)}$
is generated by a junior element. 
 
\bigskip
\section{Line bundles and exceptional divisors}

In this section we explore the structure of Hilbert basis resolutions and toric crepant resolutions of quotient singularities \(\mathbb{C}^n / G\) when  \( G \) is an abelian finite subgroup of \( \operatorname{SL}(n, \mathbb{C}) \). The results presented here extend the understanding of the exceptional sets of these resolutions and their relationship to junior elements of \( G \). We begin by proving a connection between the deformation retracts of the exceptional divisors corresponding to junior elements of $G$ and specific toric varieties associated with the fan of the resolution. More precisely, we show that if a junior element of \( G \) corresponds to a ray in the fan of a Hilbert basis resolution of $\mathbb{C}^{n}/G$, the associated irreducible component of the exceptional set is a deformation retract of an open toric subvariety (Theorem \ref{t3.5}). When the resolution is crepant, this result also establishes that each exceptional divisor is normally embedded and the total space of its normal bundle is isomorphic to an open toric subvariety of the resolution.

Furthermore, by combining these results, we describe the global structure of the resolution. For \( n = 3 \), we identify compact junior elements of \( G \), and show that a toric crepant resolution of \(\mathbb{C}^3 / G\) is a toric glueing of the total spaces of the canonical bundles over the exceptional divisors associated with these compact junior elements (Corollary \ref{cor3.7}). A special case arises when there is only one compact junior element, where the resolution itself is isomorphic to the total space of the canonical bundle over the corresponding exceptional divisor (Corollary  \ref{cor3.8}).

\subsection{Neighborhood retracts and  tubular neighborhoods}
We start by recalling the definition of tubular neighborhoods in the category of complex manifolds.  
\begin{definition} Let $E$ be a submanifold 
of a complex manifold $X$.
 \begin{itemize}
     \item[(a)]  \( E \) is a \emph{holomorphic neighborhood retract} of \( X \) if there is a neighborhood \( U \) of \( E \) and a holomorphic map \( h : U \to E \) such that \( h|_E = \text{identity} \);
     
\item[(b)] \( E \) is \emph{normally embedded} in \( X \) if there is a neighborhood \( U_0 \) of the zero section \( Z_{E,X} \) of the normal bundle of $E$ with respect to $X$, \( N_{E,X} \), and a biholomorphic map of \( U_0 \) onto a neighborhood $U$ of \( E \) in \( X \) which is the identity on \( Z_{E,X} \). In this case $U$ is called of \emph{tubular neighborhood} of $E$.
 \end{itemize}

\end{definition}
For more details about the above definitions see \cite{MorrowRossi78}. In the category of real smooth manifolds, tubular neighborhoods always exist,
 while this is not the case in the category of complex manifolds, as  the normal bundle sequence
$$0 \to T_E \to T_X | _E \to N_{E, X} \to 0$$
may not split (see \cite{AbateBracciTovena09}); here $T_{X}$ is the tangent bundle of $X$ and $N_{E,X}$ is the normal bundle of $E$ with respect to $X$.

\subsection{Exceptional divisors and their algebraic tubular neighborhoods}
We will prove the existence of  ``algebraic neighborhood retracts" in the following context. Let $G$ be a finite abelian subgroup of $\operatorname{SL}(n,\mathbb{C})$ and let $\{g_{1},\dots,g_{s}\}$ be the set of junior classes of $G$. Suppose that there is a Hilbert basis resolution $f:X_{\Sigma}\to\mathbb{C}^{n}/G=U_{\sigma,N}$ of the quotient variety $\mathbb{C}^{n}/G$. By Proposition \ref{p.c.T}, $\hat{g_{1}},\dots,\hat{g}_{s}$ are elements of $\bold{Hlb}_{N}(\sigma)$. Thus, $E_{g_{i}};=V(\operatorname{Cone}(\hat{g_{i}}))$ is a exceptional prime divisor of the resolution for every $i=1,\dots, s$. We will see that there is an open toric set $U_{i}$ of $X_{\Sigma}$ containing $E_{g_{i}}$ together with a torus invariant divisor $D_{i}$ of $E_{g_{i}}$ such that one finds an isomorphism $\varphi: \operatorname{tot}(\mathcal{O}_{E_{g_{i}}}(D_{i}))\to U_{i}$ that is the identity in the zero section. In particular, when $f$ is a crepant resolution of $\mathbb{C}^{n}/G$, the variety $X_{\Sigma}= \bigcup_{i=1}^{s}U_{i}$,  $\mathcal{O}_{E_{g_{i}}}(D_{i})$ is   the normal bundle $\mathcal{N}_{E_{g_{i}}/X_{\Sigma}}$ of $E_{g_{i}}$ in $X_{\Sigma}$, and $U_{i}$ is an algebraic  tubular neighborhood of $E_{g_{i}}$.

As a warm-up we   start with an example that somewhat recaps what has been said so far.
\begin{example}\label{Ex3.1}
Let $G$ be the cyclic group $\mathbb{Z}_{5,(1,2,2)}\subset \operatorname{SL}(3,\mathbb{C})$. In this case $$\hat{G}=\{(0,0,0), \frac{1}{5}(1,2,2), \frac{1}{5}(2,4,4), \frac{1}{5}(3,1,1),\frac{1}{5}(4,3,3)\}$$ 
and $\mathbb{C}^{3}/G=U_{\sigma,N}$, where $N=\mathbb{Z}^{3}+\mathbb{Z}\frac{1}{5}(1,2,2)$. Note that $\{\frac{1}{5}(1,2,2),\frac{1}{5}(3,1,1),(0,0,1)\}$ is a basis of $N$ and 
\begin{align}
 (1,0,0)&=-(\frac{1}{5}(1,2,2))+2(\frac{1}{5}(3,1,1))+0(0,0,1)\\
 (0,1,0)&=\ 3(\frac{1}{5}(1,2,2))+(-1)(\frac{1}{5}(3,1,1))+(-1)(0,0,1).
\end{align}

Thus, after a change of    coordinates, one can assume $\mathbb{C}^{4}/G=U_{\xi,\mathbb{Z}^{3}}$, where $$\xi=\operatorname{Cone}(u_{1}:=(-1,2,0), u_{2}:=(3,-1,-1),e_{3}).$$ Moreover, the junior elements of $G$ are given as $\hat{g_{1}}=e_{1}$ and $\hat{g_{2}}=e_{2}$. This way, after performing a star subdivision of $\xi$ at the sequence $(\hat{g}_{1},\hat{g_{2}})$ one has the following picture 
\begin{center}
\begin{tikzpicture}[scale=0.50]
 	\path  (0,0) to (4,2.3) to  (4,6.9) to (0,0) ;
 	\path  (8,0) to (4,2.3) to  (4,6.9) to (8,0) ;
 	\path  (0,0) to (4,2.3)  to  (4,0) to (0,0);
 	\path (4,0) to (4,2.3)  to  (8,0) to (4,0);
 	\path (1.5,0) to (4,2.3)  to  (6.5,0) to (1.5,0);
 	\draw [fill] (0,0) circle (3pt);
 	\draw [fill]  (8,0) circle (3pt);
 	\draw [fill] (4,6.9) circle (3pt);
 	\draw [fill] (5.04,2.898) circle (3pt);
 	
    \draw [fill] (2.88,1.656) circle (3pt);
 	
 	\draw (0,0) -- (8,0); \draw (0,0) -- (4,6.9); \draw (8,0) --
 	(4,6.9); \node at (-0.5,0) {$u_1$}; \node at (8.6,0)
 	{$u_2$}; \node  at (4.2,7.4)
 	{$ e_3$}; \node at
 	(5.1,3.4) {$\hat{g_{1}}$};\node  at (2.35,1.9)
 	{$ \hat{g_{2}}$};
 	 
\end{tikzpicture}
\hskip1cm
 \raisebox{5em}{
 \begin{tikzpicture}
     $\xrightarrow{(\hat{g_{1}},\hat{g_{2}})}$
 \end{tikzpicture}} \hskip1cm
 \begin{tikzpicture}[scale=0.50]
 	\path  (0,0) to (4,2.3) to  (4,6.9) to (0,0) ;
 	\path  (8,0) to (4,2.3) to  (4,6.9) to (8,0) ;
 	\path  (0,0) to (4,2.3)  to  (4,0) to (0,0);
 	\path (4,0) to (4,2.3)  to  (8,0) to (4,0);
 	\path (1.5,0) to (4,2.3)  to  (6.5,0) to (1.5,0);
 	\draw [fill] (0,0) circle (3pt);
 	\draw [fill]  (8,0) circle (3pt);
 	\draw [fill] (4,6.9) circle (3pt);
    \draw [fill] (5.04,2.898) circle (3pt);
 	
    \draw [fill] (2.88,1.656) circle (3pt);
 	
 	\draw (0,0) -- (8,0); \draw (0,0) -- (4,6.9); \draw (8,0) --
 	(4,6.9);\draw (0,0) -- (5.04,2.898); \draw (8,0) -- (5.04,2.898); \draw (4,6.9) -- (5.04,2.898);\draw (4,6.9) -- (2.88,1.656); \draw (8,0) -- (2.88,1.656); \node at (-0.5,0) {$u_1$}; \node at (8.6,0)
 	{$u_2$}; \node  at (4.2,7.4)
 	{$ e_3$}; \node at
 	(5.8,3) {$\bold{\xi_{5}}$};\node  at (4,3.7)
 	{$\bold{\xi_{1}}$}; \node  at (1.9,2)
 	{$\bold{\xi_2}$};\node  at (2.5,0.5)
 	{$\bold{\xi_3}$};\node  at (4.9,1.8)
 	{$\bold{\xi_4}$};
	
 \end{tikzpicture} 
\end{center}
Let $\phi:X_{\Xi}\longrightarrow \mathbb{C}^{3}/G$ the corresponding crepant resolution and let $\rho_{1}=\operatorname
{Cone}(\hat{g_{1}}), \rho_{2}=\operatorname
{Cone}(\hat{g_{2}}), \rho_{3}=\operatorname
{Cone}(e_{3}), \rho_{4}=\operatorname
{Cone}(u_{1}), \rho_{5}=\operatorname
{Cone}(u_{2})$ be the rays of $\Xi$. Note that $Exc(\phi)=E_{g_{1}}\cup E_{g_{2}}$, where $E_{g_{i}}:=D_{\rho_{i}}=\overline{O(\rho_{i})}$ for $i=1,2$. Furthermore, one can check that $E_{g_{1}}\cong\mathbb{P}^{2}$ and $E_{g_{2}}\cong \mathbb F_{3}$,   the third Hirzebruch surface. Now let us describe $X_{\Xi}$ in homogeneous coordinates. Since $X_{\Xi}$ is smooth and its fan has a cone of maximal dimension, it follows that $\operatorname{Cl}(X_{\Xi})=\operatorname{Pic}(X_{\Xi})$ is a free group; morever, since $X_{\Xi}$ has no torus factors, the   sequence    $$\xymatrix{0\ar[r]&\mathbb{Z}^{3}\ar[r]^{\varphi}&\bigoplus_{i=1}^{5}D_{\rho_{i}}\ar[r]^{\psi}&\operatorname{Pic}(X_{\Xi})\ar[r]&0} $$
is exact. Moreover, 
 \begin{eqnarray*}
      \operatorname{div} (\chi^{(1,0,0)})= & D_{\rho_{1}}-D_{\rho_{4}}+3D_{\rho_{5}} \\
      \operatorname{div} (\chi^{(0,1,0)})= & D_{\rho_{2}}+2D_{\rho_{4}}-D_{\rho_{5}} \\
      \operatorname{div} (\chi^{(0,0,1)})= & D_{\rho_{3}}-D_{\rho_{5}}.
 \end{eqnarray*}
 Thus $[D_{\rho_{1}}]=[D_{\rho_{4}}]-3[D_{\rho_{5}}], [D_{\rho_{2}}]=-2[D_{\rho_{4}}]+[D_{\rho_{5}}], [D_{\rho_{3}}]=[D_{\rho_{5}}]$ in $\operatorname{Pic}(X_{\Xi})$. In particular, $\{[D_{\rho_{4}}],[D_{\rho_{5}}]\}$ is basis of $\operatorname{Pic}(X_{\Xi})$, and   one can identify $\operatorname{Pic}(X_{\Xi})\cong\mathbb{Z}^{2}$. This way, the previous exact sequence can be rewritten as 
 $$\xymatrix{0\ar[r]&\mathbb{Z}^{3}\ar[r]^{\varphi}&\mathbb{Z}^{5}\ar[r]^{\psi}&\mathbb{Z}^{2}\ar[r]&0} $$
 where $\varphi(m_{1},m_{2},m_{3})=(m_{1},m_{2},m_{3},-m_{1}+2m_{2},3m_{1}-m_{2}-m_{3})$ and $\psi(n_{1},n_{2},n_{3}n_{4},n_{5})=(n_{1}-2n_{2}+n_{4},-3n_{1}+n_{2}+n_{3}+n_{5})$. Applying the functor $\Hom(-,\mathbb{C}^{*})$ to the latter sequence, one gets
 $$\xymatrix{0\ar[r]&(\mathbb{C}^{*})^{2}\ar[r]^{\psi^{*}}&\ (\mathbb{C}^{*})^{5}\ar[r]^{\varphi^{*}}&(\mathbb{C}^{*})^{3}\ar[r]&0} $$
 where $\psi^{*}(t_{1},t_{2})=(t_{1}t_{2}^{-3},t_{1}^{-2}t_{2},t_{2},t_{1},t_{2})$ and $\varphi^{*}(q_{1},q_{2},q_{3},q_{4},q_{5})=(q_{1}q_{4}^{-1}q_{5}^{3},q_{2}q_{4}^{2}q_{5}^{-1},q_{3}q_{5}^{-1})$. Notice that $\psi^{*}$ defines a action of $(\mathbb{C}^{*})^{2}$ on $\mathbb{C}^{5}$. Let $S=\mathbb{C}[x_{1},x_{2},x_{3},x_{4},x_{5}]$ be total coordinate ring of $X_{\Xi}$, where each variable $x_{i}$ corresponds to the ray $\rho_{i}$ of $\Xi$. $S$ is a graded ring such that $\deg(x_{i})=[D_{\rho_{i}}]\in \operatorname{Pic}(X_{\Xi})$. Consider the  monomial ideal $$B(\Xi):=(x_{4}x_{5},x_{1}x_{5},x_{1}x_{3}, x_{3}x_{4},x_{2}x_{4})$$
 and its zero locus $Z(\Xi):=V(B(\Xi))\subset\mathbb{C}^{5}$. Given polynomial $g\in S$, denote by $D(g)$ the principal open set of $\mathbb{C}^{5}$ where $g\neq 0$. Note that 
 $$\mathbb{C}^{5}\backslash Z(\Xi)=D(x_{4}x_{5})\cup D(x_{1}x_{5})\cup D(x_{1}x_{3})\cup D(x_{3}x_{4})\cup D(x_{2}x_{4})$$
 and each principal open set  in this union is invariant under the action of $(\mathbb{C}^{*})^{2}$. One has  $$(\mathbb{C}^{5}\backslash Z(\Xi)/(\mathbb{C}^{*})^{2})\cong X_{\Xi}$$ and each homogeneous ideal of $S$ which is contained in $B(\Xi)$ provides a closed subset of  $X_{\Xi}$. One has 
 \begin{align}
     X_{\Xi}=&\{[x_{1}:x_{2}:x_{3}:x_{4}:x_{5}]|(x_{1},x_{2},x_{3},x_{4},x_{5})\in\mathbb{C}^{5}\backslash Z(\Xi)\} \\
     E_{g_{1}}=&\{[0:x_{2}:x_{3}:x_{4}:x_{5}]|(0,x_{2},x_{3},x_{4},x_{5})\in D(x_{4}x_{5})\cup D(x_{3}x_{4})\cup D(x_{2}x_{4})\}\\
     E_{g_{2}}=&\{[x_{1}:0:x_{3}:x_{4}:x_{5}]|(x_{1},0,x_{3},x_{4},x_{5})
     \\ &  \ \  \ \ \in D(x_{4}x_{5})\cup D(x_{1}x_{5})\cup D(x_{1}x_{3})\cup D(x_{3}x_{4})\} \\
     X_{\Xi_{g_{1}}}:=&\{[x_{1}:x_{2}:x_{3}:x_{4}:x_{5}]|(x_{1},x_{2},x_{3},x_{4},x_{5})
     \\ & \ \  \ \   \in D(x_{4}x_{5})\cup D(x_{3}x_{4})\cup D(x_{2}x_{4})\}\\
     X_{\Xi_{g_{2}}}:=&\{[x_{1}:x_{2}:x_{3}:x_{4}:x_{5}]|(x_{1},x_{2},x_{3},x_{4},x_{5})\\ &  \ \  \ \  \in D(x_{4}x_{5})\cup D(x_{1}x_{5})\cup D(x_{1}x_{3})\cup D(x_{3}x_{4})\}.
 \end{align}
 
 Note that each $X_{\Xi_{g_{i}}}$ is an open set   containing $E_{g_{i}}$. Furthermore, the projections \[
\pi_{i}:X_{\Xi_{g_{i}}} \longrightarrow E_{\hat{g_{1}}}
\]
defined by
\[
\pi_{1}([x_{1}:x_{2}:x_{3}:x_{4}:x_{5}]) = [0:x_{2}:x_{3}:x_{4}:x_{5}]
\]
\[
\pi_{2}([x_{1}:x_{2}:x_{3}:x_{4}:x_{5}]) = [x_{1}:0:x_{3}:x_{4}:x_{5}]
\] are well defined morphisms which are the identity in $E_{g_{i}}$. Therefore, $E_{g_{i}}$ is a algebraic neighborhood retract of $X_{\Xi}$.     
\end{example}

 Given a Cartier divisor $D=\sum_{\rho\in\Xi(1)}a_{\rho}D_{\rho}$ of a toric variety $X_{L,{\Xi}}=X_{\Xi}$, the following construction will be important for our goals. Let $u_{\rho}$ be the minimal rational generator of $\rho\in \Xi(1)$. For each $\tau\in\Xi$ consider the cone in $L_{\mathbb{R}}\times\mathbb{R}$ defined by
   $$\tilde{\tau}:=\operatorname{Cone}((0,1),(u_{\rho},-a_{\rho})|u_{\rho}\in\tau).$$
   Note that $\tilde{\tau}$ is a strongly convex rational polyhedral cone in $L_{\mathbb{R}}\times\mathbb{R}$. Let $\Xi_{D}$ be the set consisting of the cones $\tilde{\tau}$ and their faces, for $\tau\in\Xi$. This is a fan  in $L_{\mathbb{R}}\times\mathbb{R}$ and the projection $L_{\mathbb{R}}\times\mathbb{R}\longrightarrow L$ is a compatible morphism for the fans $\Xi_{D}$ and $\Xi$. Hence, we get a toric morphism $$\pi:X_{\Xi_{D}}\longrightarrow X_{\Xi}.$$ From this construction one has the following result  \cite[Prop.~7.3.1]{Cox11}.
   \begin{proposition}
       
  \label{prop 3}
   	$\pi:X_{\Xi_D}\longrightarrow X_{\Xi}$ is a line  bundle whose sheaf of sections is $\mathcal{O}_{X_{\Xi}}(D)$.
   \end{proposition}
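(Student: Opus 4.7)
The plan is to show that $\pi$ is Zariski-locally trivial on the affine cover $\{U_\tau\}_{\tau \in \Xi}$ of $X_\Xi$, and that the resulting transition cocycle coincides with the one defining $\mathcal{O}_{X_\Xi}(D)$. Because the construction is purely fan-theoretic, everything reduces to a lattice-level calculation.

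First, I would exploit that $D$ is Cartier: for each $\tau \in \Xi$ there exists $m_\tau \in M = \operatorname{Hom}_\mathbb{Z}(L,\mathbb{Z})$ with $\langle m_\tau, u_\rho \rangle = -a_\rho$ for every $\rho \in \tau(1)$. Consider the $\mathbb{Z}$-linear automorphism $\Phi_\tau$ of $L \times \mathbb{Z}$ defined by $(u,t) \mapsto (u,\, t - \langle m_\tau, u \rangle)$. Then $\Phi_\tau$ fixes $(0,1)$ and sends each generator $(u_\rho, -a_\rho)$ of $\tilde\tau$ to $(u_\rho, 0)$. Hence $\Phi_\tau(\tilde\tau) = \tau \times \operatorname{Cone}((0,1))$, and the induced toric isomorphism yields $U_{\tilde\tau} \cong U_\tau \times \mathbb{A}^1$ compatibly with the projection to the first factor. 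Restricting $\pi$ to $\pi^{-1}(U_\tau) = U_{\tilde\tau}$ we then recover the first-factor projection, establishing local triviality.

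Next I would glue: on the overlap $U_{\tau \cap \tau'}$, the change of trivialization $\Phi_\tau \circ \Phi_{\tau'}^{-1}$ shifts the fibre coordinate by $\langle m_\tau - m_{\tau'}, u \rangle$, which after dualization becomes multiplication of the fibre by $\chi^{m_\tau - m_{\tau'}}$. These are precisely the transition functions of the line bundle associated with the Cartier divisor $D$ whose local defining equations on $U_\tau$ are $\chi^{-m_\tau}$. Since a line bundle on $X_\Xi$ is determined up to isomorphism by its trivializing cocycle, this identifies $\pi : X_{\Xi_D} \to X_\Xi$ as the total space of the line bundle with sheaf of sections $\mathcal{O}_{X_\Xi}(D)$.

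The delicate point is bookkeeping of signs and of the duality between $L$ and $M$: an ill-chosen sign in the definition of $\Phi_\tau$, or in the recipe for reading off a character from a shift on the extra $\mathbb{Z}$-factor, would produce the dual line bundle $\mathcal{O}_{X_\Xi}(-D)$ instead of $\mathcal{O}_{X_\Xi}(D)$. Once the conventions are pinned down so that $\Phi_\tau$ is $\mathbb{Z}$-linear, fixes $(0,1)$, and flattens $\tilde\tau$ onto $\tau \times \mathbb{R}_{\geq 0}$, the rest is a routine cocycle comparison and the two claims, local triviality and the identification of the sheaf of sections, follow simultaneously.
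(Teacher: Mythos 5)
The paper itself offers no proof of this proposition: it is quoted verbatim from \cite[Prop.~7.3.1]{Cox11}, and your argument — local trivialization of $\pi$ over each $U_\tau$ via the Cartier data $\{m_\tau\}$, followed by a comparison of cocycles — is essentially the proof given there, so your approach is correct. The only point worth pinning down is the one you flag yourself: the cocycle you compute is the one by which the \emph{fibre coordinate} $\chi^{(-m_\tau,1)}$ transforms, namely $\chi^{m_{\tau'}-m_\tau}$, which is the \emph{inverse} of the cocycle $\chi^{m_\tau-m_{\tau'}}$ governing the components of a section of $\mathcal{O}_{X_\Xi}(D)$ in the frames $\chi^{-m_\tau}$; since a fibre coordinate on $\operatorname{tot}(\mathcal{L})$ is a local section of $\mathcal{L}^\vee$, these mutually inverse cocycles are exactly consistent, and the convention $(u_\rho,-a_\rho)$ does land on $\mathcal{O}_{X_\Xi}(D)$ rather than $\mathcal{O}_{X_\Xi}(-D)$.
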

  In particular, this proposition implies that the total space of the line bundle corresponding to a  Cartier divisor of a toric variety is also a toric variety.
  
  \begin{lemma}\label{lema 1}
 Let $\{u_{1},\dots,u_{n}\}$ be a basis of  $\mathbb{R}^{n}$. If $b_{i}\in \operatorname{Conv}(k_{i}u_{1},\dots,k_{i}u_{n})$, for $i=1,\dots, n$ and $k_{i}\in \mathbb{Z}_{>0}$, are such that $\{b_{1},\dots,b_{n}\}$ is another basis of $\mathbb{R}^{n}$ and  $b\in \operatorname{Conv}(ku_{1},\dots,ku_{n})$, for some $k\in \mathbb{Z}_{>0}$, and $b=a_{1}b_{1}+\dots+a_{n}b_{n}$, then $k_{1}a_{1}+\dots+k_{n}a_{n}=k$.  
  \end{lemma}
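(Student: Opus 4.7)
The plan is to reduce the claim to a single linear-functional computation. Because $\{u_1,\dots,u_n\}$ is a basis of $\mathbb{R}^n$, there is a unique linear functional $L\colon\mathbb{R}^n\to\mathbb{R}$ characterized by $L(u_i)=1$ for all $i=1,\dots,n$. This $L$ is precisely the tool that detects the affine hyperplanes $\operatorname{Conv}(ku_1,\dots,ku_n)$ for varying $k$.

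First I would record the key observation: for any positive integer $k'$, every point $p\in\operatorname{Conv}(k'u_1,\dots,k'u_n)$ can be written as $p=\sum_{i=1}^{n}c_i(k'u_i)$ with $c_i\ge 0$ and $\sum c_i=1$, so by linearity
\begin{equation*}
L(p)\;=\;\sum_{i=1}^{n}c_i\,k'\,L(u_i)\;=\;k'\sum_{i=1}^{n}c_i\;=\;k'.
\end{equation*}
Applying this to $b_i\in\operatorname{Conv}(k_iu_1,\dots,k_iu_n)$ yields $L(b_i)=k_i$, and applying it to $b\in\operatorname{Conv}(ku_1,\dots,ku_n)$ yields $L(b)=k$.

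Next I would apply $L$ to the decomposition $b=a_1b_1+\dots+a_nb_n$. Linearity of $L$ gives
\begin{equation*}
k\;=\;L(b)\;=\;\sum_{i=1}^{n}a_i\,L(b_i)\;=\;\sum_{i=1}^{n}a_ik_i,
\end{equation*}
which is exactly the desired identity. There is no real obstacle here: the proof is essentially the observation that the affine hyperplane $\{x\in\mathbb{R}^n : L(x)=k'\}$ contains $\operatorname{Conv}(k'u_1,\dots,k'u_n)$, together with the linearity of $L$. The hypothesis that $\{b_1,\dots,b_n\}$ is a basis is not strictly used for the numerical identity itself (it ensures the coefficients $a_i$ are uniquely determined, so that the statement is meaningful).
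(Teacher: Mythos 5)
Your proof is correct and is essentially the same argument as the paper's: the paper expands $b$ and each $b_j$ in the basis $\{u_1,\dots,u_n\}$ and sums the coordinates, which is exactly the evaluation of your functional $L$ (the sum-of-coordinates functional in the $u$-basis). Your packaging via $L$ just organizes the same computation more cleanly, avoiding the double-indexed coefficient bookkeeping.
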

\begin{proof}
	One can write $b=\sum_{i=1}^{n}\alpha_{i}ku_{i}$, where $\sum_{i=1}^{n}\alpha_{i}=1$, and $b_{j}=\sum_{i=1}^{n}a_{ij}k_{j}u_{i}$, where $\sum_{i=1}^{n}a_{ij}=1$ . The latter implies that $b=\sum_{i=1}^{n}(\sum_{j=1}^{n}a_{j}k_{j}a_{ij})u_{i}$. Thus $\alpha_{i}k=\sum_{j=1}^{n}k_{j}a_{j}a_{ij}$. It follows that 
	$$k=\sum_{i=1}^{n}\alpha_{i}k=(\sum_{i=1}^{n}a_{i1})k_{1}a_{1}+\dots+(\sum_{i=1}^{n}a_{in})k_{n}a_{n}=k_{1}a_{1}+\dots+k_{n}a_{n}$$
	
\end{proof}

   Now we can prove our first   result.

\begin{theorem}
    \label{t3.5}
Let $G$ be an abelian finite subgroup of $\operatorname{SL}(n,\mathbb{C})$. Suppose that there is a Hilbert desingularization $\phi:X_{\Xi}\rightarrow \mathbb{C}^{n}/G=U_{\sigma,N}$ of the quotient variety, and for each $\hat{g}\in \hat{G}\cap \bold{Hlb}_{N}(\sigma)$ denote by $E_{g}$  the irreducible component of the exceptional set of $\phi$, corresponding to the ray $\rho_{g}:=\operatorname{Cone}(\hat{g})$. If $\operatorname{age}(\hat{g})=1$ then $E_{g}$ is a deformation retract of the toric variety $X_{\Xi_{g}}$ , where $\Xi_{g}$ is the fan that consists of all the faces of the cones that appear in the set $$\Xi_{g}(n):=\{\eta\in\Xi(n)|\rho_{g}\preceq\eta\}.$$ In particular $X_{\Xi_{g}}$ is open in $X_{\Xi}$. 
\end{theorem}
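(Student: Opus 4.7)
The plan is to build the retraction from the one-parameter subgroup $\lambda^{\hat{g}} : \mathbb{C}^* \to T_N$ associated with $\hat{g} \in N$, exploiting the smooth structure of the Hilbert basis resolution. First I would confirm that $\Xi_g$, closed under taking faces by construction, is a subfan of $\Xi$, and hence that $X_{\Xi_g} \hookrightarrow X_{\Xi}$ is a torus-invariant open immersion. The inclusion $E_g \subset X_{\Xi_g}$ is then immediate from the orbit-cone correspondence: $E_g = \bigcup_{\rho_g \preceq \tau \in \Xi} O(\tau)$, and any such $\tau$ is a face of some maximal cone $\eta \in \Xi(n)$ which necessarily contains $\rho_g$ and therefore lies in $\Xi_g(n)$, so $O(\tau) \subset X_{\Xi_g}$.

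Next I would use the fact that $\hat{g}$ belongs to every maximal cone $\eta \in \Xi_g(n)$ to extend the one-parameter subgroup action, via the torus action, to a morphism
\[
\Phi : \mathbb{A}^1 \times X_{\Xi_g} \longrightarrow X_{\Xi_g}, \qquad (t, x) \longmapsto \lambda^{\hat{g}}(t) \cdot x \ \text{ for } t \neq 0,
\]
extended at $t = 0$ by the standard theory of limits of one-parameter subgroups in toric varieties (see \cite[Sec.~3.2]{Cox11}). Setting $H(s, x) := \Phi(1-s, x)$ for $s \in [0,1]$ gives a continuous homotopy, and to verify that $H$ is a strong deformation retraction onto $E_g$ I would check the required identities on each affine chart $U_\eta$ for $\eta \in \Xi_g(n)$. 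Since $\Xi$ is smooth and $\rho_g \preceq \eta$, the minimal generators of $\eta$ form a $\mathbb{Z}$-basis $\{\hat{g}, v_1, \dots, v_{n-1}\}$ of $N$; the dual basis identifies $U_\eta \cong \mathbb{C}^n$ with coordinates $(y_0, y_1, \dots, y_{n-1})$ in which the action reads
\[
\lambda^{\hat{g}}(t) \cdot (y_0, y_1, \dots, y_{n-1}) = (t\, y_0,\, y_1, \dots,\, y_{n-1}),
\]
and $E_g \cap U_\eta = \{y_0 = 0\}$. This makes the three required properties $H(0, \cdot) = \operatorname{id}$, $H(1, x) \in E_g$, and $H(s, x) = x$ whenever $x \in E_g$ simultaneously transparent.

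The main obstacle I anticipate is not any of the local computations but the globalization step: one must know that the composed action extends to an $\mathbb{A}^1$-action uniformly across all the affine charts $U_\eta$ covering $X_{\Xi_g}$. The defining property of $\Xi_g$---namely that $\rho_g$ is a face of every maximal cone of $\Xi_g$---is precisely what delivers this, because in each such cone $\eta$ the vector $\hat{g}$ lies in $\eta$, so the extension to $t = 0$ is regular on $U_\eta$, and the various local extensions automatically agree on overlaps since they all descend from a single global $T_N$-action. It is worth noting that the junior hypothesis $\operatorname{age}(\hat{g}) = 1$ does not actually enter the argument at this stage; it becomes essential only for the finer identifications, in the crepant setting, of $X_{\Xi_g}$ with the total space of the normal bundle of $E_g$.
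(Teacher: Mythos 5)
Your proof is correct, but it takes a genuinely different route from the paper. The paper does not argue via the $\mathbb{C}^*$-action at all: it builds an explicit lattice isomorphism $\overline{f}\colon N(g)\times\mathbb{Z}\to N$ sending $(\overline{0},1)\mapsto\hat{g}$ and $(\overline{u_i},\operatorname{age}(u_i))\mapsto u_i$, and checks (using Lemma \ref{lema 1}) that $\overline{f}$ carries the fan of the total space of the toric line bundle $\mathcal{O}_{E_g}(D)$, $D=-\sum\operatorname{age}(u)\,D_{\operatorname{Cone}(\overline{u})}$, onto $\Xi_g$; the deformation retract is then read off from the line bundle projection. Your one-parameter-subgroup argument is more elementary and more local: the extension of $\lambda^{\hat g}$ to an $\mathbb{A}^1$-action on $X_{\Xi_g}$, computed in the smooth charts $U_\eta\cong\mathbb{C}^n$ as $(y_0,y_1,\dots,y_{n-1})\mapsto(ty_0,y_1,\dots,y_{n-1})$, immediately yields a strong deformation retraction (indeed an algebraic retraction $r=\Phi(0,-)$), and your observation that $\operatorname{age}(\hat g)=1$ is never used is accurate --- the hypothesis enters the paper's proof only to make $\overline{f}(\overline{u_\rho},k_\rho)=u_\rho$ come out right, i.e.\ to pin down \emph{which} line bundle $X_{\Xi_g}$ is. The trade-off is that the paper's construction delivers exactly the global line bundle identification that Theorem \ref{t3.4} needs (where $D$ becomes $K_{E_g}$ and hence $\mathcal{N}_{E_g/X_\Xi}$), whereas your argument only produces the local $\mathbb{A}^1$-fibration; one would still have to assemble the transition functions to recover that finer statement. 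For the theorem as literally stated, both proofs are complete.
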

\begin{proof}
    Consider the   lattice $N(\hat{g})=N/\hat{g}\mathbb{Z}\cong\mathbb{Z}^{n-1}$ and the set $$\operatorname{Star}(g):=\{\overline{\eta}\subseteq N(\hat{g})| \rho_{g}\preceq\eta\in\Xi\}.$$
Note that $\operatorname{Star}(g)$ is a fan in $N(\hat{g})_{\mathbb{R}}$ and it is actually the fan of $E_{g}$. Furthermore, all elements of $\operatorname{Star}(g)(1)$ are of the form $\overline{\operatorname{Cone}(\hat{g},u)}=\operatorname{Cone}(\overline{u})$, for some $u\in \bold{Hlb}_{N}(\sigma)$ and $\operatorname{Cone}(\hat{g},u)\in\Xi_{g}(2)$.  Since $E_{g}$ is a smooth variety any of its Weil divisors are Cartier. Consider the Cartier divisor $$D:=\sum_{\operatorname{Cone}(\overline{u})\in \operatorname{Star}(g)(1)}-(\operatorname{age}(u))D_{\operatorname{Cone}(\overline{u})},$$ where $D_{\operatorname{Cone}(\overline{u})}$ is the toric prime divisor corresponding to the ray $\operatorname{Cone}(\overline{u})$. Let $\xi:=\operatorname{Cone}(\hat{g},u_{1},\dots,u_{n-1})$ be a cone in $\Xi(n)$. This way, $\{\overline{u_{1}}\dots,\overline{u_{n-1}}\}$ is a basis of $N(g)$ and hence $\{(\overline{0},1), (\overline{u_{1}}, \operatorname{age}(u_{1})),\dots, (\overline{u_{n-1}}, \operatorname{age}(u_{n-1}))\}$ is a basis of $N(g)\times \mathbb{Z}$. Consider the lattice isomorphism 
\[
\overline{f} : N(g) \times \mathbb{Z} \longrightarrow N
\]
defined by
\[
\overline{f}(\overline{0}, 1) = \hat{g}, \quad 
\overline{f}(\overline{u_1}, \mathrm{age}(u_1)) = u_1, \dots, \quad 
\overline{f}(\overline{u_{n-1}}, \mathrm{age}(u_{n-1})) = u_{n-1}.
\]
Let $\rho$ be an element of $\Xi_{g}(1)$ and denote by $u_{\rho}$ its ray generator and $k_{\rho}$ the age of $u_{\rho}$. Note that $u_{\rho}\in \operatorname{Conv}(k_{\rho}e_{1},\dots,k_{\rho}e_{n})$. One can write $u_{\rho}=a\hat{g}+a_{1}u_{1}+\dots+a_{n-1}u_{n-1}$, hence, by Lemma \ref{lema 1}, $$k_{\rho}=a(\operatorname{age}(\hat{g}))+a_{1}(\operatorname{age}(u_{1}))+\dots+a_{n-1}(\operatorname{age}(u_{n-1})).$$ Thus, one gets 
\begin{align}
	\overline{f}(\overline{u_{\rho}},k_{\rho}) & = \overline{f}(\overline{a\hat{g}+a_{1}u_{1}+\dots+a_{n-1}u_{n}},a(\operatorname{age})+a_{1}(\operatorname{age}(u_{1}))\dots+a_{n-1}(\operatorname{age}(u_{n-1})) \\ &
	= \overline{f}(\overline{0},1)+a_{1}f(\overline{u_{1}},\operatorname{age}(u_{1}))+\dots +a_{n-1}\overline{f}(\overline{u_{n-1}},\operatorname{age}(u_{n-1})) \\ & 
	= u_{\rho}.&
\end{align}
Let \( \eta = \mathrm{Cone}(v_1, \dots, v_s) \) be a cone in \( \Xi_g \), where \( v_i \in \bold{Hlb}_{N}(\sigma) \). If \( \hat{g} \in \eta \), then we can suppose \( u_1 = \hat{g} \), and we have 
\[
\overline{n} \in \operatorname{Star}(g), \quad \hat{\overline{\eta}} \in \operatorname{Star}(g)_D,
\]
so that
\[
\overline{f}_{\mathbb{R}}(\hat{\overline{\eta}}) = \eta.
\]

If \( \hat{g} \not\in \eta \), then \( \xi := \rho_g + \eta \) is a cone in \( \Xi_g \) such that \( \eta \) is one of its faces. In this case,
\[
\hat{\overline{\xi}} = \mathrm{Cone}((\overline{0}, 1), (u_1, \mathrm{age}(u_1)), \dots, (u_n, \mathrm{age}(u_n)))
\]
is a cone in \( \operatorname{Star}(g)_D \). Thus, its face 
\[
\eta' := \mathrm{Cone}((u_1, \mathrm{age}(u_1)), \dots, (u_n, \mathrm{age}(u_n)))
\]
is also in \( \operatorname{Star}(g)_D \). Moreover, 
\[
\overline{f}_{\mathbb{R}}(\eta') = \eta.
\]

The two cases analyzed above show that \( \overline{f} \) is compatible with the fans \( \operatorname{Star}(g)_D \) and \( \Xi_g \), and hence, one obtains an isomorphism of toric varieties
\[
f: X_{\operatorname{Star}(\hat{g})_{K_{E_g}}} \longrightarrow X_{\Xi_g}.
\]

The last statement implies that \( X_{\Xi_g} \) is a line bundle over its subset \( E_g \), and therefore, \( E_g \) is a (strong) deformation retract of \( X_{\Xi_g} \).
\end{proof}

  \begin{theorem}\label{t3.4} Let $G$ be an abelian finite subgroup of $\operatorname{SL}(n,\mathbb{C})$ and suppose that there is a crepant resolution $\mathbb{\phi}:X_{\Xi}\rightarrow \mathbb{C}^{n}/G=U_{\sigma,N} $ of the quotient variety. If $g$ is a junior element of $G$, then   $E_{g}$ is normally embedded in $X_{\Xi}$. In particular, the total space of the canonical bundle of $E_{g}$, $\operatorname{tot}(\omega_{E_{g}})$, is isomorphic to the toric variety $X_{\Xi_{g}}$, and 
    $$X_{\Xi}=\bigcup_{\hat{g}\in\hat{G}\cap\bigtriangleup_{1}}X_{\Xi_{g}}$$
    where $X_{\Xi_{g}}$ is defined in the same way as the previous theorem. 
  \end{theorem}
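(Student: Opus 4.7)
The plan is to leverage Theorem~\ref{t3.5} together with two crepancy-specific observations: the divisor $D$ built in its proof specializes to the canonical divisor $K_{E_g}$, and the ambient $X_\Xi$ has trivial canonical sheaf, so adjunction will identify $\omega_{E_g}$ with $\mathcal N_{E_g/X_\Xi}$.

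First I would verify that Theorem~\ref{t3.5} applies to $\phi$. By the proposition of Dais recalled just before this section, the existence of a toric crepant resolution of $\mathbb{C}^n/G$ forces $\mathbf{Hlb}_N(\sigma) = \nu_G$, so $\phi$ is in particular a Hilbert basis resolution. Theorem~\ref{t3.5} therefore yields a toric isomorphism $X_{\Xi_g} \cong X_{\operatorname{Star}(\rho_g)_D}$, with
\[
D \;=\; -\sum_{\operatorname{Cone}(\overline{u}) \in \operatorname{Star}(\rho_g)(1)} \operatorname{age}(u)\, D_{\operatorname{Cone}(\overline{u})}.
\]
Crepancy places every ray generator $u$ of $\Xi$ on the affine hyperplane $\{\langle(1,\dots,1),\cdot\rangle = 1\}$, so $\operatorname{age}(u)=1$ for every such $u$; therefore $D$ reduces to $-\sum D_{\operatorname{Cone}(\overline{u})} = K_{E_g}$. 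Applying Proposition~\ref{prop 3} to $K_{E_g}$ on the smooth toric variety $E_g$ identifies $X_{\operatorname{Star}(\rho_g)_{K_{E_g}}}$ with $\operatorname{tot}(\omega_{E_g})$, so $X_{\Xi_g} \cong \operatorname{tot}(\omega_{E_g})$, which is the second assertion.

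To upgrade this to a normal embedding I would use adjunction. Since $m=(1,\dots,1)\in M$ and $\chi^m = x_1\cdots x_n$ is $G$-invariant, $K_{U_{\sigma,N}} = -\operatorname{div}(\chi^m)$ is principal; the crepant equation $K_{X_\Xi}=\phi^{\ast}K_{U_{\sigma,N}}$, combined with the fact that the support function of $K_{X_\Xi}$ still equals $\langle(1,\dots,1),\cdot\rangle$ on every ray of $\Xi$, gives $K_{X_\Xi}=-\operatorname{div}(\chi^m)$ on $X_\Xi$. Hence $\omega_{X_\Xi}\cong \mathcal O_{X_\Xi}$, and adjunction yields $\omega_{E_g} \cong (\omega_{X_\Xi}\otimes \mathcal O(E_g))|_{E_g} \cong \mathcal N_{E_g/X_\Xi}$. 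Combined with the previous step this produces an isomorphism $X_{\Xi_g} \cong \operatorname{tot}(\mathcal N_{E_g/X_\Xi})$ that is the identity on the zero section $E_g$, which is precisely the statement that $E_g$ is normally embedded, with tubular neighborhood $X_{\Xi_g}$.

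Finally, for the covering $X_\Xi = \bigcup_g X_{\Xi_g}$, I would argue that every maximal cone $\eta\in\Xi(n)$ must contain at least one ray $\rho_g$ with $g$ junior. Otherwise all $n$ rays of $\eta$ would lie in $\{e_1,\dots,e_n\}$, forcing $\eta=\sigma$ and so $\Xi$ to be the trivial fan, contradicting $\phi$ being a resolution of a nontrivial singularity (the case $G=\{\operatorname{Id}\}$ being vacuous). Hence $\Xi(n)=\bigcup_{g}\Xi_g(n)$, and covering $X_\Xi$ by the affine charts $U_\eta$ produces the desired union. The main obstacle I foresee is the bookkeeping around the age function in Theorem~\ref{t3.5}: one must identify the ``age'' used there (applied to elements of $\mathbf{Hlb}_N(\sigma)$) with the ramification weight $\varphi_{K_{U_{\sigma,N}}}(u) = \langle(1,\dots,1),u\rangle$, and confirm that crepancy pins this weight at $1$ for every ray of $\Xi$; once this dictionary is set, Proposition~\ref{prop 3} together with adjunction assembles both parts of the conclusion cleanly.
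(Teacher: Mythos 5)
Your proof is correct and follows essentially the same route as the paper: crepancy forces every ray generator of $\Xi$ to have age $1$, so the twisting divisor of Theorem \ref{t3.5} collapses to $K_{E_g}$, Proposition \ref{prop 3} identifies $X_{\Xi_g}$ with $\operatorname{tot}(\omega_{E_g})$, and adjunction together with $\omega_{X_{\Xi}}\cong\mathcal{O}_{X_{\Xi}}$ converts this into the normal bundle. Your explicit check that every maximal cone of $\Xi$ contains a junior ray (hence $X_{\Xi}=\bigcup_{g}X_{\Xi_g}$) is a useful addition, as the paper leaves that covering claim implicit.
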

\begin{proof}
The proof is very similar to that of the previous theorem up to some adjustments. Again, one considers the lattice $N(g)$ and the fan $\operatorname{Star}(g)$ of $E_{g}$. Since $E_{g}$ is a smooth variety, its canonical divisor  $K_{E_{g}}$ is Cartier.  Since $\phi$ is a crepant morphism, the age of all the ray generators of the rays in $\Xi$ should equal $1$. This way, one has 
$$\sum_{\operatorname{Cone}(\overline{u})\in \operatorname{Star}(g)(1)}-(\operatorname{age}(u))D_{\operatorname{Cone}(\overline{u})}=\sum_{\operatorname{Cone}(\overline{u})\in \operatorname{Star}(g)(1)}-D_{\operatorname{Cone}(\overline{u})}=K_{E_{g}}.$$
Now consider the fan $\operatorname{Star}(g)_{K_{E_{g}}}$ in  $(N(g)\times \mathbb{Z})_{\mathbb{R}}$. By Proposition \ref{prop 3}, $X_{\operatorname{Star}(g)_{K_{E_{g}}}}$ is the total space of  $\mathcal{O}_{E_{g}}(K_{E_{g}})=\omega_{E_{g}}$. Let $\xi:=\operatorname{Cone}(\hat{g},u_{1},\dots,u_{n-1})$ be a cone in $\Xi(n)$. This way, $\{\overline{u_{1}}\dots,\overline{u_{n-1}}\}$ is a basis of $N(g)$ and hence $\{(\overline{0},1), (\overline{u_{1}}, \operatorname{age}(u_{1})),\dots, (\overline{u_{n-1}}, \operatorname{age}(u_{n-1}))\}$ is a basis of $N(g)\times \mathbb{Z}$. Consider the  isomorphism $\overline{f}:N(g)\times \mathbb{Z}\longrightarrow N$  of lattices defined from $\overline{f}(\overline{0},1)=g$, $\overline{f}(\overline{u_1},1)=u_{1}$,\dots, $\overline{f}(\overline{u_{n-1}},1)=u_{n-1}$. Write $u_{\rho}=a_{1}g+a_{2}v_{1}+\dots+a_{n}v_{n-1}$. Since $u_{\rho},g,v_{1},\dots v_{n-1}\in T_{\sigma}$, Lemma \ref{lema 1} guarantees $a_{1}+\dots+a_{n}=1$. With the same arguments of the proof of the last theorem, one gets that $\overline{f}_{\mathbb{R}}$ induces an isomorphism of toric varieties $f:X_{\operatorname{Star}(\hat{g})_{K_{E_{g}}}}\longrightarrow X_{\Xi_{g}}$.

Since $\phi$ is a crepant resolution of $\mathbb{C}^{n}/G$ then $\omega_{X_{\Xi}}\cong \mathcal{O}_{X_{\Xi}}$. By the adjunction formula 
\[
\omega_{E_{g}} \simeq \omega_{X_{\Xi}} \otimes_{\mathcal{O}_{X_{\Xi}}} \mathcal{N}_{E_{g}/X_{\Xi}},
\]
one has that $\omega_{E_{g}}\cong \mathcal{N}_{E_{g}/X_{\Xi}}$. Therefore $E_{g}$ is normally embedded in $X_{\Xi}$.
\end{proof}

\begin{corollary} \label{cor3.7}
	Let $G$ be an abelian finite subgroup of $\operatorname{SL}(3,\mathbb{C})$. Suppose $G$ has compact junior elements $g_{c_{1}},\dots, g_{c_{s}}$ (those elements whose fractional expressions lie in $\operatorname{Relint}(\sigma))$. Then there is a toric minimal model $X_{\Xi}$ of $\mathbb{C}^{3}/G=U_{\sigma,N}$  such that $X_{\Xi}$ is a toric glueing of $\operatorname{tot}(\omega_{E_{g_{c_{1}}}}),\dots, \operatorname{tot}(\omega_{E_{g_{c_{s}}}})$.  
\end{corollary}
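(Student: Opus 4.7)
My plan is to construct a specific toric crepant resolution $\phi\colon X_\Xi\to\mathbb{C}^{3}/G$ by a carefully ordered sequence of star subdivisions of the junior simplex $\bigtriangleup$, and then to apply Theorem \ref{t3.4} to express $X_\Xi$ as the desired gluing.

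I would build the triangulation of $\bigtriangleup$ by star subdividing first at each compact junior $\hat{g}_{c_1},\dots,\hat{g}_{c_s}$ in some chosen order, and afterwards at each remaining (non-compact) junior element. The invariant to be maintained inductively is the property $(\ast)$: every $2$-simplex of the current triangulation has at least one compact junior as a vertex. The initial subdivision at $\hat{g}_{c_1}$ produces three $2$-simplices all containing $\hat{g}_{c_1}$. A later compact subdivision at $\hat{g}_{c_j}$ inside some triangle $T$ produces three sub-triangles, each containing either $\hat{g}_{c_j}$ itself or the pre-existing compact vertex of $T$. For a non-compact subdivision at $\hat{h}\in\partial\bigtriangleup$, I would argue that $\hat{h}$ must lie in the interior of an edge $[A,B]$ of some triangle $(A,B,C)$ with $[A,B]\subset\partial\bigtriangleup$: by convexity, any edge joining a boundary vertex $A$ to an interior vertex $B$ meets $\partial\bigtriangleup$ only at $A$, so $\hat{h}$ cannot lie on such an edge. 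Hence $A,B\in\partial\bigtriangleup$ are both non-compact, and by $(\ast)$ the opposite vertex $C$ must be compact; the two resulting sub-triangles $(A,\hat{h},C)$ and $(\hat{h},B,C)$ both retain $C$, preserving $(\ast)$.

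Next I would check that $\Xi$ defines a smooth crepant resolution. Crepancy is immediate from the toric ramification formula, since every new ray is generated by a junior element of age $1$. As every point of $\nu_G$ is a vertex of the triangulation and $\nu_G$ equals the set of lattice points of $\bigtriangleup$, no $2$-simplex contains any extra lattice point; Proposition \ref{p.c.T}(ii) then yields terminal singularities, which Remark \ref{R.S.T} upgrades to smoothness in dimension $3$.

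To finish, property $(\ast)$ forces every maximal cone $\eta\in\Xi(3)$ to satisfy $\rho_{g_{c_i}}\preceq\eta$ for some $i\in\{1,\dots,s\}$, whence $\Xi(3)=\bigcup_{i=1}^{s}\Xi_{g_{c_i}}(3)$ and consequently $X_\Xi=\bigcup_{i=1}^{s}X_{\Xi_{g_{c_i}}}$; Theorem \ref{t3.4} identifies each $X_{\Xi_{g_{c_i}}}$ with $\operatorname{tot}(\omega_{E_{g_{c_i}}})$, exhibiting $X_\Xi$ as the asserted toric gluing. The main obstacle I anticipate is precisely the preservation of $(\ast)$ across the non-compact subdivisions, and this hinges on the convexity remark above: a point of $\partial\bigtriangleup$ cannot sit in the relative interior of an edge one of whose endpoints is interior to $\bigtriangleup$, so each non-compact subdivision is channelled onto a triangle whose compact vertex lies opposite the affected boundary edge and is therefore retained by both new sub-triangles.
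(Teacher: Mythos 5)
Your proof is correct and follows the same route as the paper: perform the star subdivisions at the compact junior elements first and at the remaining points of $\nu_{G}$ afterwards, so that every maximal cone of the resulting fan contains the ray of some compact junior, and then invoke Theorem \ref{t3.4}. The paper merely asserts that this ordering forces every maximal cone to contain a compact junior ray, whereas your inductive invariant $(\ast)$, together with the convexity observation that a boundary point of $\bigtriangleup$ can only lie on an edge entirely contained in $\partial\bigtriangleup$, actually verifies that claim (your only slight imprecision is that a compact subdivision landing on an interior edge produces four, not three, new triangles, but all of them contain the new compact vertex, so $(\ast)$ still holds).
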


\begin{proof}
 Since in dimension $3$ every Gorenstein toric variety with terminal singularities is smooth, every sequence of star subdivisions at all the points of $\nu_{G}$ provides a crepant resolution of $\mathbb{C}^{3}/G=U_{\sigma,N}$. This way, if one starts a sequence of star subdivisions first at the points $\hat{g_{c_{1}}},\dots, \hat{g_{c_{s}}}$, and then at    the   remaining points of $\nu_G$, one gets a fan $\Xi$ such that any of its maximal cones  contains the ray $\operatorname{Cone}(\hat g_{c_{i}})$ for some $i=1,\dots,s$. From the previous Corollary, it follows that $X_{\Xi}=\bigcup_{i=1}^{s}X_{\Xi_{g_{c_{i}}}}$.
\end{proof}

\begin{corollary} \label{cor3.8}
		Let $G$ be an abelian finite subgroup of $\operatorname{SL}(3,\mathbb{C})$. Then $G$   has only one compact junior element $g$ if and only if $\mathbb{C}^{3}/G=U_{N,\sigma}$ has a toric crepant resolution $X_{\Xi}$ that is isomorphic to $\operatorname{tot}(\omega_{E_{g}})$.
\end{corollary}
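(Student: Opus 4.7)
The plan is to prove the two implications separately: the forward direction reduces immediately to Corollary \ref{cor3.7} and Theorem \ref{t3.4}, while the converse requires a completeness analysis of the star fans inside $\operatorname{Star}(g)_{K_{E_g}}$.

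For the forward implication, I would assume that $g$ is the unique compact junior element and then invoke Corollary \ref{cor3.7} with $s=1$. Concretely, starting the sequence of star subdivisions at $\hat g$ and completing it at the remaining points of $\nu_G$ (all of which lie on the boundary of $\bigtriangleup$) produces a toric crepant resolution $X_{\Xi}$ in which every maximal cone contains $\rho_g$; hence $\Xi(n)=\Xi_g(n)$, so $\Xi=\Xi_g$, and Theorem \ref{t3.4} yields $X_{\Xi}=X_{\Xi_g}\cong \operatorname{tot}(\omega_{E_g})$.

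For the converse, suppose $\phi:X_{\Xi}\to \mathbb{C}^3/G$ is a toric crepant resolution with $X_{\Xi}\cong \operatorname{tot}(\omega_{E_g})$. As recalled in Section 2, the compact exceptional prime divisors of $\phi$ are precisely the $E_h$ with $h$ a compact junior element, so it suffices to show that the zero section $E_g$ is the only compact torus-invariant prime divisor of $\operatorname{tot}(\omega_{E_g})$. The total space has the fan $\operatorname{Star}(g)_{K_{E_g}}$ in $N(g)\times \mathbb{Z}$ supplied by Proposition \ref{prop 3}, with rays $(0,1)$ and $(\overline u,\operatorname{age}(u))$ for each $\operatorname{Cone}(\overline u)\in \operatorname{Star}(g)(1)$; a torus-invariant prime divisor $V(\rho)$ is compact if and only if the quotient star fan of $\rho$ in $(N(g)\times \mathbb{Z})/\mathbb{Z}\rho$ is complete.

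For $\rho=(0,1)$ the quotient fan is exactly $\operatorname{Star}(g)$, the fan of the projective surface $E_g$, so $V((0,1))=E_g$ is compact. For any other ray $\rho=(\overline u,\operatorname{age}(u))$, the maximal cones of $\operatorname{Star}(g)_{K_{E_g}}$ containing $\rho$ correspond to the two $2$-dimensional cones of $\operatorname{Star}(g)$ that contain $\operatorname{Cone}(\overline u)$; after modding out by $\mathbb{Z}\rho$, both resulting cones still contain the nonzero image of $(0,1)$, so their union lies on one side of a line through the origin and cannot cover $\mathbb{R}^2$, which forces $V(\rho)$ to be non-compact. The main obstacle is precisely this combinatorial check, but it is handled by the structural observation that the fiber ray $(0,1)$ belongs to every maximal cone of $\operatorname{Star}(g)_{K_{E_g}}$ and therefore persists in the star of any other ray, obstructing completeness. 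Consequently $E_g$ is the unique compact torus-invariant divisor, so $g$ is the only compact junior element and the proof is complete.
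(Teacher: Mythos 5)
Your forward implication is essentially the paper's: with $g$ the unique compact junior element you run Corollary \ref{cor3.7} with $s=1$, note that after star--subdividing at $\hat g$ first every maximal cone contains $\rho_g$ (so $\Xi=\Xi_g$), and conclude by Theorem \ref{t3.4}. Your converse, however, takes a genuinely different route from the paper's. The paper argues topologically: a compactly supported cohomology group of $X_{\Xi}$ is computed via Poincar\'e duality and the retraction onto the complete surface $E_g$, and is matched against the Ito--Reid count of compact junior classes. You instead count compact torus--invariant prime divisors directly in the fan $\operatorname{Star}(g)_{K_{E_g}}$. The combinatorial core of your count is correct: every maximal cone contains the fibre ray $(0,1)$; the star of $(0,1)$ is the complete fan $\operatorname{Star}(g)$ of $E_g$, so the zero section is compact; and for any other ray $\rho$ the nonzero image $v$ of $(0,1)$ lies in every cone of the star of $\rho$, so $-v$ is never covered and that star cannot be complete, whence $V(\rho)$ is non-compact. (Your phrase ``lies on one side of a line'' slightly overstates matters --- the union of the two quotient cones may exceed a half-plane --- but the conclusion that it misses $-v$ is exactly what is needed.)

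The gap is in the reduction ``it suffices to show that the zero section is the only compact \emph{torus-invariant} prime divisor of $\operatorname{tot}(\omega_{E_g})$.'' The hypothesis gives only an abstract isomorphism $X_{\Xi}\cong\operatorname{tot}(\omega_{E_g})$, not a toric one; the compact divisors $E_h$ of $X_{\Xi}$ attached to compact junior elements are therefore carried to compact prime divisors of $\operatorname{tot}(\omega_{E_g})$ that you cannot assume are torus-invariant, and your fan computation says nothing about those. To close the gap you should show that the zero section is the only compact irreducible divisor of $\operatorname{tot}(\omega_{E_g})$ \emph{tout court}. This is true: a compact irreducible divisor $D$ cannot lie over a curve in $E_g$ (the preimage of an irreducible curve is irreducible and contains affine fibres), so $D$ is finite over $E_g$ and is cut out fibrewise by a monic polynomial $t^{d}+a_{1}t^{d-1}+\dots+a_{d}$ with $a_{i}\in H^{0}(E_g,\omega_{E_g}^{\otimes i})$; since $E_g$ is a smooth complete toric, hence rational, surface, all plurigenera vanish, so $D$ is the zero section. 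Alternatively, replace the divisor count by an isomorphism-invariant such as $\dim H_{4}(X_{\Xi},\mathbb{Q})=\dim H_{4}(E_g,\mathbb{Q})=1$, which is what the paper's topological argument accomplishes. With either patch your argument is complete.
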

\begin{proof}
One direction of the proof follows from the previous corollary. For the opposite implication, suppose that such a resolution $X_{\Xi}$ exists. Since  $X_{\Xi}$ is a line bundle of $E_{g}$, by Poincaré duality one gets  $H_{c}^{4}(X_{\Xi},\mathbb{Q})\cong H_{2}(X_{\Xi},\mathbb{Q})\cong H_{2}(E_{g},\mathbb{Q})\cong H^{0}(E_{g},\mathbb{Q})$. As $E_{\hat{g}}$ is a complete toric surface,   $\dim H^{0}(E_{g},\mathbb{Q})=1$. Thus $\dim H_{c}^{4}(X_{\Xi},\mathbb{Q})=1$. By \cite{ItoReid94}, $\dim H_{c}^{4}(X_{\Xi} ,\mathbb{Q})$ is the number of compact junior elements of $G$.   
\end{proof}
 


\bigskip\frenchspacing
\def\cprime{$'$}

\end{document}